\theoremstyle{plain}
\newtheorem{theorem}{Theorem}[section]
\newtheorem{proposition}[theorem]{Proposition}
\theoremstyle{remark}
\newtheorem{definition}[theorem]{Definition}
\newtheorem{assumption}[theorem]{Assumption}
\newtheorem{example}[theorem]{Example}
\def\Real{\mathbb{R}}
\def\A{{\mathcal A}}
\def\H{{\mathcal H}}
\newcommand{\dd}{\mathrm{d}}
\DeclareMathOperator{\sech}{sech}
\DeclareMathOperator{\TV}{TV}
\DeclareMathOperator{\lfdr}{lfdr}
\DeclareMathOperator{\lnsr}{lnsr}
\DeclareMathOperator{\lidr}{clar}  
\DeclareMathOperator{\clar}{clar}  
\DeclareMathOperator{\lfsr}{lfsr}
\DeclareMathOperator{\Var}{Var}
\def\V{{\mathcal V}}
\def\indep{\mathrel{\rlap{${\perp}$}\kern 2pt{\perp}}}
\def\given{\mid}
\def\q{\alpha}
\def\tol{\delta}
\def\de{d}
\def\R{\mathbb{R}}
\def\E{\textnormal{E}}
\def\P{\mathrm{P}}
\def\H{\mathrm{H}}
\def\e{\varepsilon}
\title{Interpretation of local false discovery rates under the zero assumption}
\author{Daniel Xiang\\
\texttt{dxiang@uchicago.edu}\\
\and 
Nikolaos Ignatiadis\\
\texttt{ignat@uchicago.edu}\\
\and
Peter McCullagh\\
%\texttt{petermccullagh52@gmail.com}\\
\texttt{pmcc@galton.uchicago.edu} \\
}
\date{March, 2025}
\begin{document}
\twocolumn[
  \begin{@twocolumnfalse}
    \maketitle
    \begin{abstract}
    In large-scale studies with parallel signal-plus-noise observations, the local false discovery rate is a summary statistic that is often presumed to be equal to the posterior probability that the signal is null. We prefer to call the latter quantity the local null-signal rate to emphasize our view that a null signal and a false discovery are not identical events. The local null-signal rate is commonly estimated through empirical Bayes procedures that build on the `zero density assumption,' which attributes the density of observations near zero entirely to null signals. In this paper, we argue that this strategy does not furnish estimates of the local null-signal rate, but instead of a quantity we call the complementary local activity rate (clar). Although it is likely to be small, an inactive signal is not necessarily zero. The clar dominates both the local null-signal rate and the local false sign rate and is a weakly continuous functional of the signal distribution. As a consequence, it takes on sensible values when the signal is sparse but not exactly zero. Our findings clarify the interpretation of local false discovery rates estimated under the zero density assumption.    
    \vspace{1cm}
    \end{abstract}

  \end{@twocolumnfalse}
]

\section{Introduction}

A great part of the literature on signal detection begins with the scalar signal-plus-Gaussian noise model~\citep{johnstone2004needles}, 
\begin{equation}
\label{eq:structural}
 X \sim \P, \quad\ Y \mid X \sim \mathrm{N}(X, 1),
\end{equation}
where $X$ is the unobserved signal and $Y$~is the observation. The concept of \lq false discovery\rq\ and the control of false discovery rates
plays a fundamental role in a wide range of procedures and algorithms
\citep{benjamini1995controlling, efron2010largescale, barber2015controlling}. 
Most discussions of local false discovery begin with the local null-signal rate
\begin{equation}
    \lnsr(y) := \P(X=0 \mid Y=y),
\label{eq:localnsr}
\end{equation}
where the event $X=0$ is the presumed inferential target. 

In modern large-scale inference problems, it is common to observe data $Y_1,\dotsc,Y_n$ drawn independently from a large number of distinct units or sites according to~\eqref{eq:structural}, so the empirical Bayes principle~\citep{robbins1956empirical, efron2010largescale} permits (and encourages) estimation of the signal distribution and the inferential target based on the data. 

The most widely used estimates start with a structure-agnostic two-groups model~\citep{efron2001empirical}.
Under \eqref{eq:structural}, the marginal density of $Y$ is a convex combination of the `null' Gaussian density $f_0 = \phi$ and a `non-null' density $f_1$,
\begin{equation}
f_{\eta_1}(y) =(1-\eta_1) \phi(y)+\eta_1 f_1(y),
\label{eq:generic_two_groups_model}
\end{equation}
with $\eta_1 \in [0,1]$.
The two-groups interpretation is a joint distribution on $\Real\times\{0,1\}$ such that each site is `null' with probability $1-\eta_1$, or `non-null' with probability $\eta_1$. The observation $Y$ for a `null' signal is drawn from the standard normal density, while for a `non-null' signal it is drawn from the density $f_1$ that is left unspecified. The statistician does not observe the `null' or `non-null' status, and so only observes draws from the marginal density $f_{\eta_1}$ in~\eqref{eq:generic_two_groups_model}; see Fig.~\ref{fig:two-groups-diagram} for a schematic. 

Empirical Bayes estimation strategies in the two-groups model often build on the `zero density assumption'~\citep{efron2010largescale} which attributes the density of observations near zero entirely to `null' signals. More formally:

\begin{assumption}[Zero density assumption (ZDA)]
In the two-groups model in \eqref{eq:generic_two_groups_model}, it holds that $f_{\eta_1}(0) = (1-\eta_1)\phi(0)$. Equivalently, one of the following must be true: $\eta_1 = 0$ or $f_1(0)=0$.
\label{assumption:zerodensity}
\end{assumption}

It is important to recognize that while the two-groups interpretation is widely used for estimation, there exists a fundamental discrepancy between structure-agnostic approaches based on~\eqref{eq:generic_two_groups_model} and the structural model in~\eqref{eq:structural}. Without explicit consideration of the signal distribution and its relation with the response, as in~\eqref{eq:structural}, we have no way to deduce anything about the strength of individual signals. Moreover, when the `non-null' density $f_1$ in~\eqref{eq:generic_two_groups_model} is left unspecified, the resulting estimate need not correspond to any distribution arising from a signal convolution~$\phi\ast \P$.

Nonetheless, by a suitable reinterpretation, we show how to reconcile \eqref{eq:structural} with empirical practice under the zero density assumption in the two-groups model. Our interpretation focuses on symmetric signal distributions $\P$, with only Sections~\ref{subsec:two-groups1} and~\ref{sec:asymmetry-compatibility} extending beyond this symmetry constraint.
Our contributions are as follows. 

\color{black}
\begin{figure}
\centering
\begin{tikzpicture}[
    node distance=1.8cm,
    auto,
    thick,
    >={Stealth[]}
]
\node[circle, fill, inner sep=2pt] (start) {};
\node (nulls) [above right=1.5cm and 2cm of start] {`Null'};
\node (alternatives) [below right=1.5cm and 2cm of start] {`Non-null'};
\coordinate (end) at (6,0);
\node (f0) at (end |- nulls) {$f_0(y)=\phi(y)$};
\node (f1) at (end |- alternatives) {$f_1(y)$};
\node (marginal) [right=5cm of start] {$f_{\eta_1}(y)$ as in~\eqref{eq:generic_two_groups_model}};

% Arrows
\draw[->] (start) -- node[above left] {$1-\eta_1$} (nulls);
\draw[->] (start) -- node[below left] {$\eta_1$} (alternatives);
\draw[->] (nulls) -- (f0);
\draw[->] (alternatives) -- (f1);
\draw[->, dashed] (start) -- (marginal);
\end{tikzpicture}
\caption{Diagram of the structure agnostic two-groups model, similar to~\citet[Fig. 2.3]{efron2010largescale}.}
\label{fig:two-groups-diagram}
\end{figure}

\begin{itemize}[leftmargin=*]
    \item In Section~\ref{sec-two-groups}, we explain that mapping the structural model in~\eqref{eq:structural} onto the structure-agnostic two-groups model in~\eqref{eq:generic_two_groups_model} is an inherently ambiguous task. Unless $\P(X=0)=1$, there are infinitely many ways of expressing~\eqref{eq:structural} as a binary mixture. We focus on two specific ways. Section~\ref{subsec:two-groups1} describes the decomposition that takes the `null'/`non-null' terminology literally by setting $\eta_1 = \P(X \neq 0)$ and identifying `null' sites as precisely the ones with $X=0$. We call it the null/non-null two-groups model. Section~\ref{subsec:two-groups2} describes a decomposition that is extremal in the sense that it uses the smallest possible value of $\eta_1$ in~\eqref{eq:generic_two_groups_model}; see Theorem~\ref{theo:optim} for a precise statement. We call this the inactive/active two-groups model.  
    
    \item Section~\ref{sec:zero_assumptions_inactive} ties the null/non-null and inactive/active two-groups models to the zero density assumption. The former directly contradicts the zero density assumption, while the latter conforms to it. Empirical Bayes estimates of the local false discovery rate estimated under the zero density assumption effectively operate on the latter and we demonstrate this empirically in simulations. The implication is that such estimates of the local false discovery rate may not be interpreted as estimates of the local null-signal rate in~\eqref{eq:localnsr}.    
    \item In Section~\ref{sec-probabilistic-interpretation} we explain just how local false discovery rate estimates under the zero assumption are to be interpreted under the structural model in~\eqref{eq:structural}. Such an interpretation necessitates a careful probabilistic understanding of the inactive/active two-groups model. Denoting an active/inactive signal by a binary activity indicator $\mathcal{A}$, the relevant object that takes the role of the local false discovery rate is $\clar(y) = \P(\A = 0 \given Y)$, which we call the complement of the local activity rate. The activity indicator is not the same as the non-null signal indicator $1\{X\neq 0\}$, nor is it a step function $1\{|X| \ge c\}$ for any fixed threshold~$c \ge 0$.
    Our interpretation in Proposition~\ref{prop:hsec} is based on the hyperbolic secant identity $\clar(y) = \E\{ \sech(YX) \mid Y=y\}$. In Section~\ref{subsec:lfsr} we also show that $\clar(y)$ is at least as large as the local false sign rate \citep{stephens2017false}.
    \item Section~\ref{sec-sparse-limit-approximation} provides a further rule of thumb for interpreting $\clar(Y)$. Using the technical machinery of statistical sparsity~\citep{mccullagh2018statistical}, we show that $\clar(y)$ is approximately the conditional probability that the signal is less than $1/|y|$ in absolute value.
    \item Section \ref{sec:asymmetry-compatibility} extends our inactive/active two-groups model to the setting where the signal distribution is asymmetric, and contains a precise characterization of when the marginal distribution $\P * \phi$ is compatible with the zero density assumption. In particular, every (non-trivial) signal distribution $\P$ satisfying $\E\{X \mid Y=0\}=0$ is compatible with the zero density assumption, in the sense that there is some binary mixture representation of $\P * \phi$ satisfying $f_1(0)=0$.
    \item Section~\ref{subsec:ident_vs_estim} provides some comments on identifiability and practical estimability. The main point therein is that 
    even though the local null signal rate in~\eqref{eq:localnsr} is identifiable under the structural model~\eqref{eq:structural}, in general it cannot be estimated uniformly consistently without further assumptions. The implication is that a practitioner has two possible choices: either proceed with the zero density assumption, and adopt the interpretation presented herein, or to make further assumptions on the signal distribution and estimate the local null signal rate under these restrictions.
\end{itemize}

We emphasize that the goal of this paper is not to recommend new methods but to re-interpret existing methods. 
Many statisticians realize that conventional estimators of the local false discovery rate overestimate the local null signal rate in~\eqref{eq:localnsr}. In this paper, we formalize this mismatch and provide a precise characterization.

\section{A tale of two two-groups models}
\label{sec-two-groups}

In this section we ask: just how should one map the structural model in~\eqref{eq:structural} to the two-groups model in~\eqref{eq:generic_two_groups_model} and Fig.~\ref{fig:two-groups-diagram}? Implementing such a map, necessitates the specification of $\eta_1$ and $f_1$, such that
$(1-\eta_1) + \eta_1 f_1(y) = \smallint \phi(y-x) \P(\dd x)$, that is, the marginal density under the two-groups model must agree with the marginal density under~\eqref{eq:structural}. 
We define $\mathcal{H}$ as the set of all permissible choices of $\eta_1$:
\begin{multline}
\mathcal{H} := \{\eta_1 \in [0,1] : \text{ there exists a density } f_1 \text{ s.t.} \\
    (1-\eta_1)\phi(\cdot)+\eta_1 f_1(\cdot) = \smallint \phi(\cdot-x)\P(\dd x) \}.
\label{eq:all_two_groups_models}
\end{multline}

Given $\eta_1 \in \mathcal{H}\cap (0,1]$,\footnote{If $\eta_1=0$, then the choice of $f_1$ is immaterial.} the form of $f_1$ follows from the relationship~\citep{strimmer2008unified, patra2016estimation},
\begin{equation}
\label{eq:two_groups_to_f1}
f_1(y) = \{\smallint \phi(y-x)\P(\dd x) - (1-\eta_1)\phi(y)\}/\eta_1.
\end{equation}
As we will explain below, $\mathcal{H}$ is \emph{always} a non-degenerate closed interval. Thus, the task of mapping the structural model in~\eqref{eq:structural} to a two-groups models is not well-posed, and there are infinitely many ways of implementing such a decomposition. The statistical and inferential goal at hand will determine which two-groups model is most pertinent, and in what follows we highlight two specific choices.

\subsection{Structural null/non-null two-groups model}
\label{subsec:two-groups1}

As mentioned after~\eqref{eq:localnsr}, in most of the literature, \citep{mitchell1988bayesian,johnstone2004needles},
exact null signals play a special role. 
Following this tradition, in our first decomposition of~\eqref{eq:structural} as a two-groups model, we will take the interpretation of a `null' signal literally and decompose the signal distribution $\P$ as
\begin{equation}
\P  = (1-\pi_1)\delta_0 + \pi_1 \P(\cdot \mid X\neq 0),
\label{eq:signal_dbn}
\end{equation}
where $\delta_0$ is the Dirac point mass at $0$, and the non-null proportion is 
\begin{equation}
\label{eq:non-null-proportion}
\pi_1 := \P(X\neq 0) = \smallint 1\{x \neq 0\} \P(\dd x).
\end{equation}
Assuming that $\pi_1 > 0$, the marginal density for non-zero signals is
\begin{align*}
    m_1(y) &= \int \phi(x-y) \P(\dd x \mid X\neq0) \\
    &= \frac{1}{\pi_1}\int_{\Real\setminus \{0\}} \phi(y - x) \, \P(\dd x).
\end{align*}
The choices above, lead to one possible two-groups model that we call the structural null/non-null two groups model:
\begin{equation}
%\int_{\Real} \phi(y - x) \, \P(\dd x) =
m_{\pi_1}(y) =  (1 - \pi_1) \phi(y) + \pi_1 m_1(y).
\label{mixture1}
\end{equation}
It follows that $\pi_1 \in \mathcal{H}$ defined in~\eqref{eq:all_two_groups_models}.
In our notation in~\eqref{mixture1}, we write $\pi_1$, $m_1(\cdot)$ and $m_{\pi_1}(\cdot)$, deviating from the notation using $\eta_1$, $f_1(\cdot)$ and $f_{\eta_1}(\cdot)$ in~\eqref{eq:generic_two_groups_model}. This will allow us to refer to different two-groups models without ambiguity.

%The marginal density is 
The local null-signal rate, defined in~\eqref{eq:localnsr}, is equal to
\begin{equation}\label{lnsr}
\lnsr(y) = \frac{(1-\pi_1) \phi(y)} { m_{\pi_1}(y)},
\end{equation}
and satisfies $E\{\lnsr(Y)\} =1-\pi_1$.

As an example, consider the spike-and-slab Dirac-Cauchy signal distribution,
\begin{equation}
\label{eq:dirac_cauchy}
\P^{\delta C}_{\pi_1} = (1-\pi_1) \delta_0 + \pi_1  \mathrm{C}(0.5),
\end{equation}
where $\mathrm{C}(\sigma)$ is the Cauchy distribution with probable error $\sigma > 0$ and Lebesgue density $\sigma/\{\pi(\sigma^2 + x^2) \}$. In Figure~\ref{fig:two_groups}A, we plot the marginal density $m_{\pi_1}(y)$, for the signal distribution with $\pi_1 = 0.4$. We also plot the contribution $\pi_1 m_1(y)$ of the non-null signal component. 
From the figure, one may directly see that the structural null/non-null two-groups model in~\eqref{mixture1} does not satisfy the zero density assumption (Assumption~\ref{assumption:zerodensity}). This has been explicitly pointed out in the existing literature, e.g., by~\citet{genovese2004stochastic} and~\citet[Lemma 2]{jin2008proportion}. For self-containedness, we provide a formal statement. 

\begin{proposition}
\label{proposition:null_model_violates_zero_density}
Suppose that $\pi_1$ defined in~\eqref{eq:non-null-proportion} satisfies $\pi_1 > 0$. Then, the null/non-null two-groups model in~\eqref{mixture1} does not satisfy the zero density assumption (Assumption~\ref{assumption:zerodensity}), that is, $m_1(0)>0$.
\end{proposition}

\begin{proof}
Since $\P$ assigns a positive mass to $\Real\setminus \{0\}$, it follows that
$$ m_1(0) = \frac{1}{\pi_1} \int_{\Real\setminus \{0\}} \phi(x) \, \P(\dd x) > 0.$$
\end{proof}

\begin{figure}
\centering
\begin{tabular}{ll}
A  \\ 
    \includegraphics[width=0.45\textwidth]{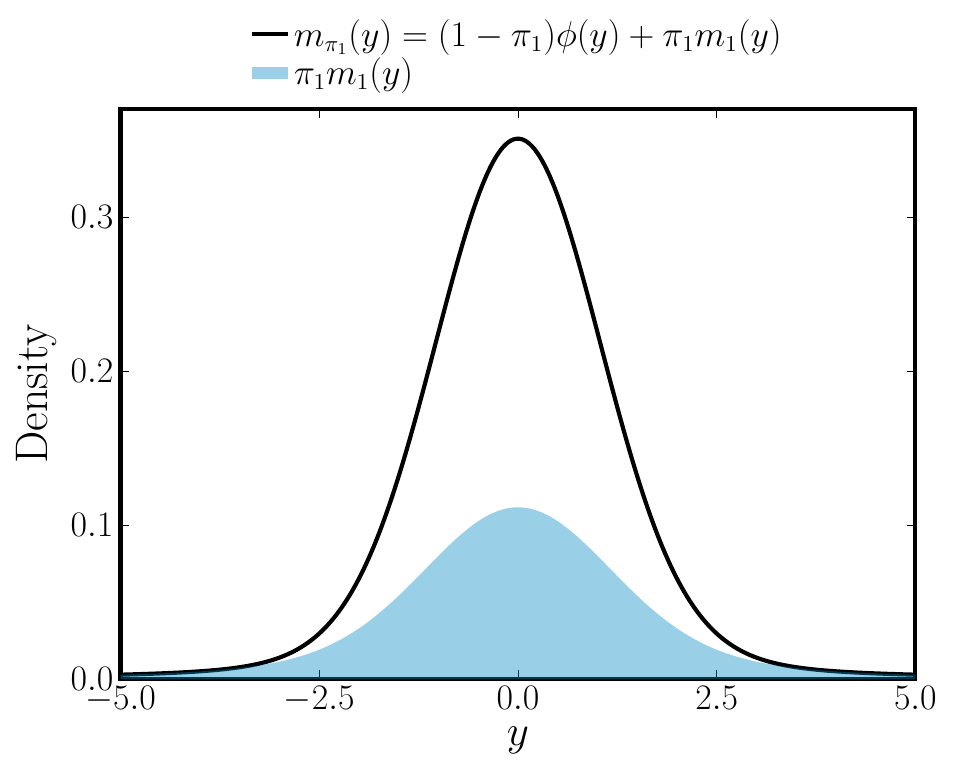} \\ B\\
    \includegraphics[width=0.45\textwidth]{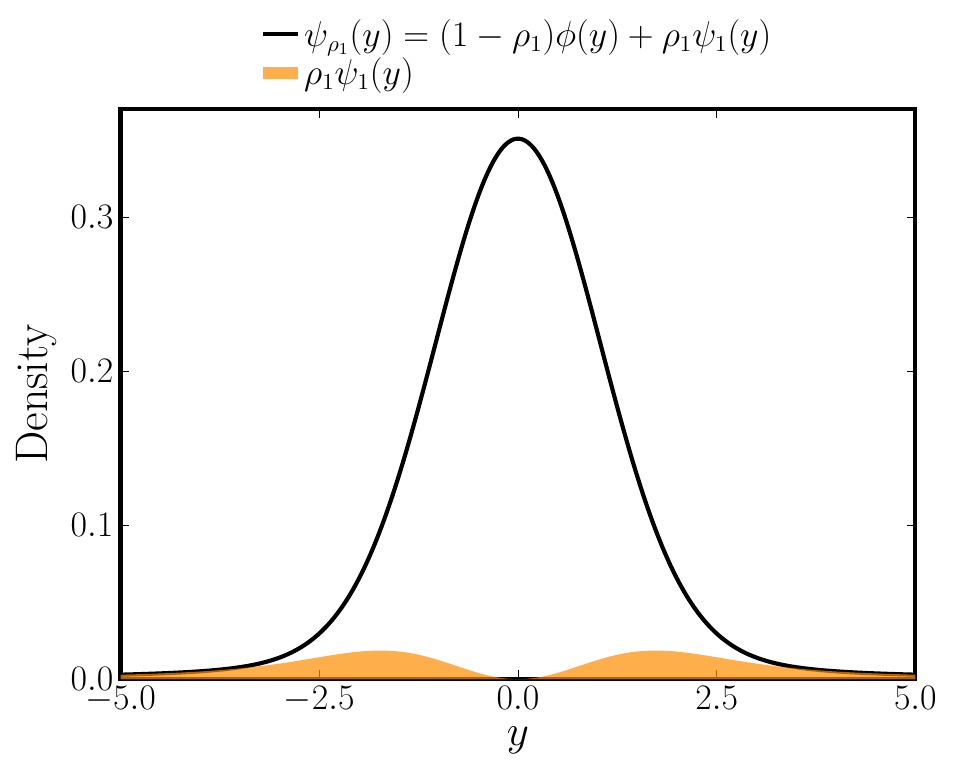} \\
\end{tabular}
\caption{Marginal density of $Y$ under the structural model~\eqref{eq:structural} with Dirac-Cauchy signal distribution $\P^{\delta C}_{0.4}$  in~\eqref{eq:dirac_cauchy}. Panel A also shows the contribution $\pi_1 m_1(y)$ of the non-null signal component of the two-groups model in~\eqref{mixture1} to the overall density. Meanwhile, Panel B shows the contribution $\rho \psi_1(y)$ of the active signal component of the alternative two-groups model in~\eqref{mixture2}. 
This population-level decomposition is qualitatively similar to the finite-sample version depicted in the bottom right panel of Figure 1 in \cite{stephens2017false}.}
\label{fig:two_groups}
\end{figure}

\subsection{Structural inactive/active two-groups model}
\label{subsec:two-groups2}

The structural null/non-null two-groups model in~\eqref{mixture1} appears to be a natural choice for a two-groups decomposition of the structural model in~\eqref{eq:structural}. It further pins down $\pi_1$ 
as one element of the set $\mathcal{H}$ in~\eqref{eq:all_two_groups_models} of possible two-groups decompositions. The fact that $\mathcal{H}$ contains further elements will not be surprising to the reader, for instance $1 \in \mathcal{H}$ via the (trivial) decomposition 
$$(1-1)\cdot \phi(y) + 1 \cdot \smallint \phi(y-x) \P(\dd x) = \smallint \phi(y-x) \P(\dd x).$$
Moreover, a convexity argument yields that $[\pi_1, 1] \subset \mathcal{H}$. It may be more surprising that unless $\pi_1=0$, $\mathcal{H}$ also contains elements $\eta_1 < \pi_1$ and that the precise boundary may be characterized as follows.

\begin{theorem}
\label{theo:optim}
    Let the signal distribution $\P$ be symmetric about the origin. Then $\mathcal{H}$ defined in~\eqref{eq:all_two_groups_models} is equal to the closed interval $[\rho_1,\, 1]$, where the lower boundary is
\begin{align}
\label{eq:rate}
\rho_1 := \int_{\mathbb R} ( 1 - e^{-x^2\!/2})\, \P(\dd x). %  = \int_{\Real \setminus{\{0\}}}( 1 - e^{-x^2\!/2})\, \P(\dd x)%< 1 - \pi_0
\end{align}
\end{theorem}

\begin{proof}
By a convexity argument, it suffices to show the following: first, $\rho_1 \in \mathcal{H}$, and second, for any $\eta_1 < \rho_1$, it necessarily holds that $\eta_1 \notin \mathcal{H}$. To show that $\rho_1 \in \mathcal{H}$:%\footnote{A brief reminder: $\cosh(u) = \{\exp(u) + \exp(-u)\}/2$.}
    \begin{align}
        &\int \phi(y - x) \, \P(\dd x) \,= \, \phi(y) \int (e^{x y} - 1+1)  e^{- x^2\!/2} \, \P(\dd x) \nonumber \\ \nonumber
   &\quad\quad \;\;\;\;\;\;\;\;\;\;\;\;\;\;\;\;\stackrel{(\star)}{=}  \phi(y) \int (\cosh(xy) - 1+1)  e^{- x^2\!/2} \, \P(\dd x) \\ 
    &\quad\quad \;\;\;\;\;\;\;\;\;\;\;\;\;\;\;\stackrel{(\star\star)}{=} (1 - \rho_1) \phi(y) +\rho_1 \psi_1(y).
   \nonumber
\end{align}
In $(\star)$ we use the symmetry of $\P$ and in $(\star \star)$ we define the density 
\begin{equation}
    \psi_1(y) := \rho_1^{-1}\phi(y)\int (\cosh(xy) - 1)  e^{- x^2\!/2} \, \P(\dd x).
    \label{eq:psi1}
\end{equation}
Conversely, let $\eta_1 < \rho_1$ and define $f_{1}$ as the density given via~\eqref{eq:two_groups_to_f1}. Then, we prove in the appendix that it must hold that $f_{1}(0) < 0$. Thus $\eta_1 \notin \mathcal{H}$.
\end{proof}
The boundary point $\rho_1$ defined in~\eqref{eq:rate} is called the sparsity rate 
by~\citet{mccullagh2018statistical}, a term motivated by replacing the indicator $x \mapsto 1\{x\neq 0\}$ in \eqref{eq:non-null-proportion} by the smooth function $x \mapsto 1-\exp(-x^2/2)$. See Section \ref{sec-background-sparsity} for a more detailed review of this notion of sparsity.

For our interpretation of the zero density assumption, the decomposition associated with the boundary point $\rho_1 \in \mathcal{H}$ is of special importance. We call it the structural inactive/active two-groups model and write
\begin{equation}
\psi_{\rho_1}(y) = (1 - \rho_1) \phi(y) +\rho_1 \psi_1(y),
\label{mixture2}
\end{equation}
where we use the following notation to distinguish it from the other two-groups models: $\rho_1$ as in~\eqref{eq:rate}, $\psi_1$ as in~\eqref{eq:psi1}, and $\psi_{\rho_1}$.

Revisiting the Dirac-Cauchy signal distribution 
$\P^{\delta C}_{\pi_1}$ in~\eqref{eq:dirac_cauchy} with $\pi_1=0.4$, we find that $\rho_1 \approx 0.12$. Figure~\ref{fig:two_groups}B shows the contribution $\rho_1 \psi_1(y)$ of the active signal component in the two-groups model~\eqref{mixture2} to the overall marginal density of $Y$. Inspecting the figure, we see that the zero density assumption holds as a consequence of the factor $\cosh(xy) - 1$ in~\eqref{eq:psi1}.
The next proposition formalizes this result and also shows that the mixture in~\eqref{mixture2} is the unique two-groups decomposition of the structural model in~\eqref{eq:structural} such that the zero density assumption holds.

\begin{proposition}
\label{proposition:inactive-active-satisfies-zero density}
Let the signal distribution $\P$ be symmetric about the origin. Among all elements $\eta_1 \in \mathcal{H}$ defined in~\eqref{eq:all_two_groups_models}, $\rho_1$ is the unique choice such that the associated two-groups model satisfies the zero density assumption (Assumption~\ref{assumption:zerodensity}). In particular, $\psi_1(0)=0$. 
\end{proposition}

\begin{proof}
Notice that $\cosh(0) = 1$. By~\eqref{eq:psi1}, it follows that $\psi_1(0)=0$.
Next let $\eta_1 > \rho_1$ and write $f_1$ for the density associated to $\eta_1$ in~\eqref{eq:all_two_groups_models} and $f_{\eta_1}$ for the marginal density as in~\eqref{eq:generic_two_groups_model}. By definition, $f_{\eta_1}(y) = \psi_{\rho_1}(y)$ for all $y$, and in particular $f_{\eta_1}(0) = \psi_{\rho_1}(0)$.
Since the two-groups model associated with $\rho_1$ satisfies the zero density assumption, $f_{\eta_1}(0) = (1-\rho_1)\phi(0) > (1-\eta_1)\phi(0)$. 
Expression \eqref{eq:two_groups_to_f1} implies $f_1(0)= \eta_1^{-1} (f_{\eta_1}(0) - (1-\eta_1)\phi(0))>~0$.
\end{proof}

The two components in the null/non-null two-groups model in~\eqref{mixture1} are associated with null signals ($X=0$) and non-null signals ($X\neq 0$) respectively. By analogy, we introduce an indicator $\A \in \{0,1\}$ for group membership in the inactive/active two-groups model~\eqref{mixture2}. We call $\A$ the activity indicator and call a signal active ($\A=1$) if its observation is drawn from $\psi_1(\cdot)$ and inactive ($\A=0$) if its observation is drawn from $\phi(\cdot)$. We postpone a more precise probabilistic definition, characterization and interpretation of $\mathcal{A}$ to Section~\ref{sec-probabilistic-interpretation}.

By analogy with the local null-signal rate in~\eqref{eq:localnsr}, we also define the complement of the local activity rate  
\begin{equation}\label{lar}
\clar(y) := \P(\A = 0 \given Y=y) = \frac{(1-\rho_1) \phi(y)} {\psi_{\rho_1}(y)}
\end{equation}
(\citet[Section 5.4]{mccullagh2018statistical}). We recall that $\psi_{\rho_1}(y) = m_{\pi_1}(y) = \smallint \phi(y-x)\P(\dd x)$. If $\pi_1 >0$,\footnote{If $\pi_1=0$, then $\clar(y) = \lnsr(y) = 1$ for all $y$.} then $\rho_1 < \pi_1$, and so
\begin{equation}
\label{eq:clar_bigger_than_lnsr}
\clar(y) > \lnsr(y) \text{ for all } y.
\end{equation}

In the next section (Section~\ref{sec:zero_assumptions_inactive}) we explain that common empirical Bayes strategies and software implementations based on the zero density assumption actually operate on~\eqref{mixture2} rather than~\eqref{mixture1}, and so provide estimates of $\clar(y)$ and not of $\lnsr(y)$.

\section{The zero density assumption targets inactivity rates}
\label{sec:zero_assumptions_inactive}

\begin{figure}
  \centering
\begin{tabular}{l}
   A \\
   \includegraphics[width=1.02\linewidth]{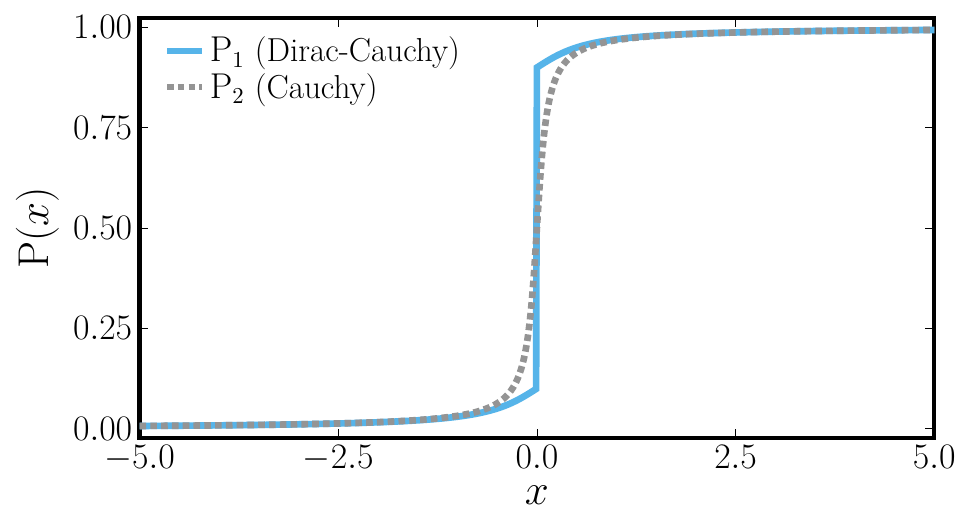} \\
   B \\ 
   \includegraphics[width=\linewidth]{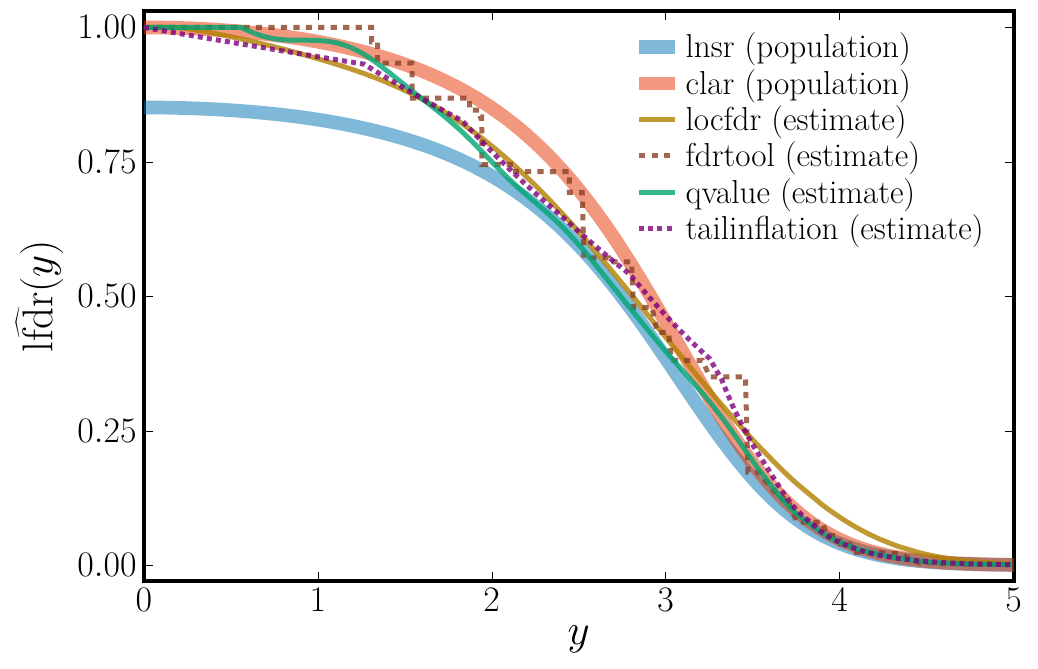} \\
   C \\ 
   \includegraphics[width=\linewidth]{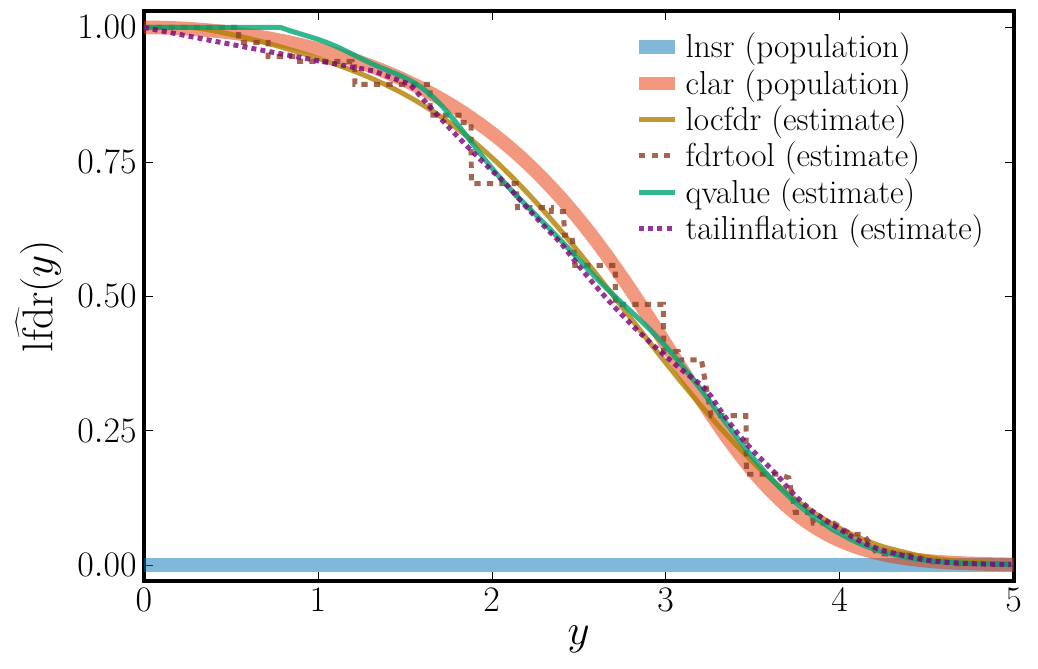} 
\end{tabular}
  \caption{Illustrative simulation. Panel A shows the distribution functions of the two signal distributions, $\P_1=\P_{0.2}^{\delta \text{C}}$ and $\P_2=\text{C}(0.1)$, defined in~\eqref{eq:dirac_cauchy}. Panel B (resp. C) shows the local null-signal rate, $\lnsr(y)$, and the complementary local activity rate, $\lidr(y)$, as a function of $y$ for the signal distribution $\P_1$ (resp. $\P_2$), as well as four estimates \smash{$\widehat{\lfdr}$} based on $10,000$ observations from the convolutional model~\eqref{eq:structural}. In panel C, \smash{$\lnsr_{\P_2}(y)=0$} for all $y$, but all estimates of \smash{$\widehat{\lfdr}$} are far from zero, and instead closely follow $\lidr(y)$.  }
  \label{fig:simple_lfdr_simulation}
\end{figure}

We now connect our preceding discussion to the zero density assumption (Assumption~\ref{assumption:zerodensity}) and to the most common implementations of local false discovery rate methodology in practice (and in existing software packages). The data consist of observations $Y_1,\dotsc,Y_n$ drawn from a large number of distinct units or sites. One posits that the marginal density of each $Y_i$ is given by $f_{\eta_1}(\cdot)$ as defined in~\eqref{eq:generic_two_groups_model} with $f_1(\cdot)$ and $\eta_1$ being unknown parameters to be estimated based on $Y_1,\dotsc,Y_n$.  

By the zero density assumption, $f_{\eta_1}(0) = (1-\eta_1)\phi(0)$, or equivalently, $\eta_1 = 1-f_{\eta_1}(0)/\phi(0)$. This expression motivates the following estimation strategy (as well as variations thereof) for $\eta_1$ and the `local false discovery rate'~\citep{efron2010largescale,genovese2004stochastic}: let \smash{$\widehat{f}(\cdot)$} be a direct estimate of the marginal density of $Y_1,\dotsc,Y_n$ (which could be parametric or nonparametric), and let
$$\widehat{\eta}_1 := 1-\frac{\widehat{f}(0)}{\phi(0)},\quad\ \widehat{\lfdr}(y) := \frac{(1-\widehat{\eta}_1) \phi(y)}{ \widehat{f}(y)}.$$

The estimated local false discovery rate $\widehat{\lfdr}(y)$ is then often interpreted as an estimate of the local null-signal rate $\lnsr(y)$~\eqref{eq:localnsr} in the null/non-null two-groups model in~\eqref{mixture1}.  However, as shown in Proposition~\ref{proposition:null_model_violates_zero_density}, unless $\pi_1=0$, the null/non-null two groups model does not satisfy the zero density assumption. Hence,  \smash{$\widehat{\lfdr}(y)$} may not be directly interpreted as an estimate of $\lnsr(y)$. Many statisticians are aware of this mismatch, and would only interpret \smash{$\widehat{\lfdr}(y)$} as a conservative (over)estimate of $\lnsr(y)$ %\fixme{citations}. 
\citep{storey2004strong,patra2016estimation}.

Here we show that a more precise interpretation is possible when $\P$ is symmetric. The key to the matter is that the inactive/active two-groups model in~\eqref{mixture2} satisfies the zero density assumption, as shown in Proposition~\ref{proposition:inactive-active-satisfies-zero density}. Thus, \smash{$\widehat{\eta}_1$} may be interpreted as an estimate of $\rho_1$ in~\eqref{eq:rate} and  \smash{$\widehat{\lfdr}(y)$} as an estimate of the complementary local activity rate $\clar(y)$ defined in~\eqref{lar}. Since $\clar(y) \geq \lnsr(y)$ (with equality only when $\pi_1=0$) as in~\eqref{eq:clar_bigger_than_lnsr}, the above statement is corcondant with the interpretation of \smash{$\widehat{\lfdr}(y)$} as a conservative (over)estimate of $\lnsr(y)$. In Sections~\ref{sec-probabilistic-interpretation} and~\ref{sec-sparse-limit-approximation} we build toward a more direct interpretation of the estimate of \smash{$\widehat{\lfdr}(y)$} by studying the complementary local activity rate $\clar(y)$.

We next consider a simulation to illustrate that commonly used software packages indeed furnish estimates of the complementary local activity rate instead of the local null-signal rate.
To this end, we consider the following two signal distributions: $\P_1 = \P^{\delta C}_{0.2}$, that is, the Dirac-Cauchy signal distribution in~\eqref{eq:dirac_cauchy} with $\pi_1 = 0.2$, and $\P_2 = \mathrm{C}(0.1)$. The cumulative distribution functions of $\P_1$ and $\P_2$ are shown in Figure~\ref{fig:simple_lfdr_simulation}A.
Despite the similarity of the signal distributions, their behavior in terms of the local null-signal rate is vastly different: $\lnsr(y)$ is identically zero for $\P_2$ and $\lnsr(0) \approx 0.85$ for $\P_1$. 

Continuing with this example, we simulate $n=10,000$ independent values $Y_1,\dotsc,Y_n$ from the convolutional model~\eqref{eq:structural} with signal distributions $\P_1$ and $\P_2$. We then compute estimates of the `local false discovery rate' \smash{$\widehat{\lfdr}(\cdot)$} using three widely-used software packages: \texttt{locfdr}~\citep{efron2015locfdr}, \texttt{fdrtool} \citep{strimmer2008fdrtool, klaus2021fdrtool}, and \texttt{qvalue}~\citep{storey2023qvalue}. The estimation strategies underlying these methods are distinct,\footnote{For estimating the marginal density, \texttt{locfdr} uses flexible parametric modeling through Lindsey's method~\citep{efron1996using}, \texttt{fdrtool} uses nonparametric shape constraints by imposing that the density of the two-sided p-values be non-increasing~\citep{grenander1956theory}, and \texttt{qvalue} uses smoothing splines.} yet they all build on the zero density assumption. We also consider one more  strategy that uses the zero density assumption along with an estimate \smash{$\widehat{f}(\cdot)$} of the marginal density 
computed using the \texttt{tailinflation} code~\citep{dumbgen2021active} for maximum likelihood subject to log-convexity of the density ratio $\widehat{f}/\phi(x)$, which is guaranteed
under~\eqref{eq:structural} (see \citealt[Example 2.1]{dumbgen2021active}).
%convexity of the density ratio $\widehat{f}/\phi(x)$, which is guaranteed under~\eqref{eq:structural} (see the proof of Theorem~\ref{thm-compatibility} below and \citealt[Example 2.1]{dumbgen2021active}).

The furnished estimates are shown in Figures~\ref{fig:simple_lfdr_simulation}B and~\ref{fig:simple_lfdr_simulation}C   (for $\P_1$, respectively $\P_2$). Panel C makes the following apparent: all four methods are evidently not estimating the zero function, $\lnsr_{\P_2}(\cdot) \equiv 0$. Instead, all four methods estimate a conservative upper bound on the $\lnsr(y)$, which as explained in Section \ref{sec-probabilistic-interpretation}, is equal to $\clar(y)$.

\section{Probabilistic interpretation of activity}
\label{sec-probabilistic-interpretation}

Our first step towards building a probabilistic interpretation for the $\clar$ is the following identity.
\begin{proposition}[Hyperbolic secant identity]
\label{prop:hsec}
    If the signal distribution $\P$ is symmetric, then
    \begin{equation}
 \clar(y) = \E\{ \sech(YX) \mid Y=y\}.
 \label{eq:hyperbolicsecant}
    \end{equation}
\end{proposition}
\begin{proof}
Starting from the right hand side of~\eqref{lar}, we find that $\clar(y)$ is equal to,
\begin{align*}
    \frac{(1-\rho_1) \phi(y)} {\psi_{\rho_1}(y)} &\stackrel{(\star)}{=} \frac {\phi(y)} {\psi_{\rho_1}(y)}  \int_\Real \frac{e^{xy}} {\cosh(xy)} e^{-x^2\!/2} \, \P(\dd x) \\
    &= \frac 1 {\psi_{\rho_1}(y)} \int_\Real \frac{\phi(y - x)} {\cosh(xy)} \, \P(\dd x) \\
    &= \E\{ \sech(YX) \mid Y=y\}.
\end{align*}
In $(\star)$ we used the symmetry of $\P$.
\end{proof}
The hyperbolic secant function $\sech(t) = 1/\cosh(t) = 2/(e^t + e^{-t})$ is a symmetric and non-negative kernel with total mass $\pi$, plotted in blue in Figure \ref{fig:sech-plot}. 
On account of continuity, the random variable $\sech(XY)$ has posterior mean that is continuous in the signal distribution $\P$. This provides one explanation for why $\clar$ is more amenable to statistical inference than the $\lnsr$; it is robust to small changes in $\P$ such as the amount of mass directly on versus near the origin (see also Proposition~\ref{prop:clar_easy_to_estimate}).

In what follows, the activity indicator $\A$ is not a deterministic function of $X$. Rather, it is a latent random variable whose joint density with $X$ and $Y$ is:
%jointly distributed with $X$ and $Y$ according to the joint probability density function:
\begin{align*}
    f(x,y,a) = \begin{cases}
        \phi(y-x)\P(\dd x) \sech(xy) \hspace{.5em} &\text{if }a=0 \\
        \phi(y-x)\P(\dd x) (1-\sech(xy)) &\text{if }a=1.
    \end{cases}
\end{align*}

Accordingly, 
\begin{align*}
    \P(\A=0 \mid Y=y,X=x)= \sech(xy).
\end{align*}
Taking the posterior expectation on both sides of the above, we find that this joint distribution is consistent with the hyperbolic secant identity,
\begin{align*}
    \P(\A=0 \mid Y=y) = \E\{ \sech(YX) \mid Y=y\}.
\end{align*}
Integrating further against $\psi_{\rho_1}(y)$ yields the complement of the mean activity rate
\begin{align*}
    \P(\A=0) = \E\{\sech(YX)\} = 1-\rho_1.
\end{align*}
It is instructive to consider two marginalizations of the triple $(X,Y,\A)$, namely that of $(Y,\A)$ and $(X,\A)$. First, the joint distribution of $Y$ and $\A$ is that of the usual two-groups model after the zero-assumption has been made:
\begin{align*}
\A \sim \text{Bernoulli}(\rho_1),\,\,\; Y \mid \A \sim (1-\A) \phi + \A \psi_1,
\end{align*}
where $\psi_1(0)=0$. For an observation $Y=y$, $\clar(y)$ is the posterior probability that it arose from the $\mathrm{N}(0,1)$ component. Secondly, marginalization over $Y$ in the joint distribution of $(X,Y,\A)$ gives
\begin{equation}
\label{eq:activity_given_x}
X \sim \P,\,\,\, \A \mid X \sim \text{Bernoulli}(1-e^{-X^2/2}).
\end{equation}
For the pair $(X,\A)$, exact null signals ($X=0$) satisfy $\A=0$. However, a non-null signal ($X\neq 0$) also has a positive probability of being inactive; this probability is decreasing in $|X|$.

This interpretation is not a matter of the authors' opinion nor is it a subjective matter in the sense of an interpretation of a Bayesian prior.  It is a mathematical interpretation forced solely by symmetry and the extreme choice of zero density for the non-null component in \eqref{eq:generic_two_groups_model}.

\subsection{Activity and local false sign rates}
\label{subsec:lfsr}

\cite{gelman2000type} and \cite{stephens2017false} raised concerns about testing the point null hypothesis $X=0$, suggesting instead to ask about the sign of $X$. 
The local false sign rate, defined by~\citet{stephens2017false} as,
\begin{equation}
\label{eq:lfsr}
    \text{lfsr}(y) := \min\left\{ \P(X\leq 0 \mid Y),\, \P(X\geq 0 \mid Y)\right\},
\end{equation}
naturally addresses whether we can recover the sign of $X$. The idea is that, if we guess that $X$ is negative when $\P(X\leq 0 \mid Y) > \P(X\geq 0\mid Y)$, then $\lfsr(Y) = \P(X\geq 0\mid Y)$ is the probability that our guess is incorrect. 

Our main result in this subsection is that we can strengthen~\eqref{eq:clar_bigger_than_lnsr} as follows (as long as $\pi_1 >0)$:
\begin{equation}
\label{eq:clar_dominates_lfsr}
\clar(y) > \text{lfsr}(y) > \lnsr(y) \text{ for all } y.
\end{equation}
Thus, by controlling $\clar(y)$ we also control directional errors. Such a result is reminiscent of the directional false discovery rate control of the Benjamini-Hochberg procedure~\citep{benjamini2005false}.

We start with an identity analogous to the hyperbolic secant identity of Proposition~\ref{prop:hsec}.

\begin{proposition}
\label{prop:exp_identity}
For symmetric\footnote{The identity holds also for asymmetric $\P$ (see Section \ref{sec:asymmetry-compatibility}).} $\P$,
    \begin{equation}
 \clar(y) =  \E\{ \exp(-YX) \mid Y=y\}.
 \label{eq:exp}
    \end{equation}
\end{proposition}
\begin{proof} 
Arguing as in Proposition~\ref{prop:hsec}, we find that,
\begin{align*}
    \frac{(1-\rho_1) \phi(y)} {\psi_{\rho_1}(y)} &= \frac {\phi(y)} {\psi_{\rho_1}(y)}  \int_{\Real} e^{-xy} e^{xy} e^{-x^2\!/2} \, \P(\dd x) \\
    &= \frac 1 {\psi_{\rho_1}(y)} \int_\Real  e^{-xy} \phi(y - x) \, \P(\dd x) \\
    &= \E\{ \exp(-YX) \mid Y=y\}.
\end{align*}
\end{proof}
In classification, e.g., by AdaBoost, $u\mapsto \exp(-u)$ is used as a surrogate loss function of the $0/1$ loss $u \mapsto 1(u \leq 0)$~\citep{freund1997decision, friedman2000additive}. It holds that $\exp(-u) \geq 1(u \leq 0)$ (with equality only at $u=0$) and so along with Proposition~\ref{prop:exp_identity}, it follows that:
\begin{equation}
\P( XY \leq 0 \mid Y=y) \leq \clar(y).
\label{eq:modified_lfsr}
\end{equation}
Moreover, for a symmetric signal distribution, $\P( XY \leq 0 \mid Y=y) = \lfsr(y)$ for all $y \neq 0$, and noting that $\clar(0)=1$, the conclusion in~\eqref{eq:clar_dominates_lfsr} follows.

\color{black}

\begin{figure}
  \centering
  \includegraphics[width=\linewidth]{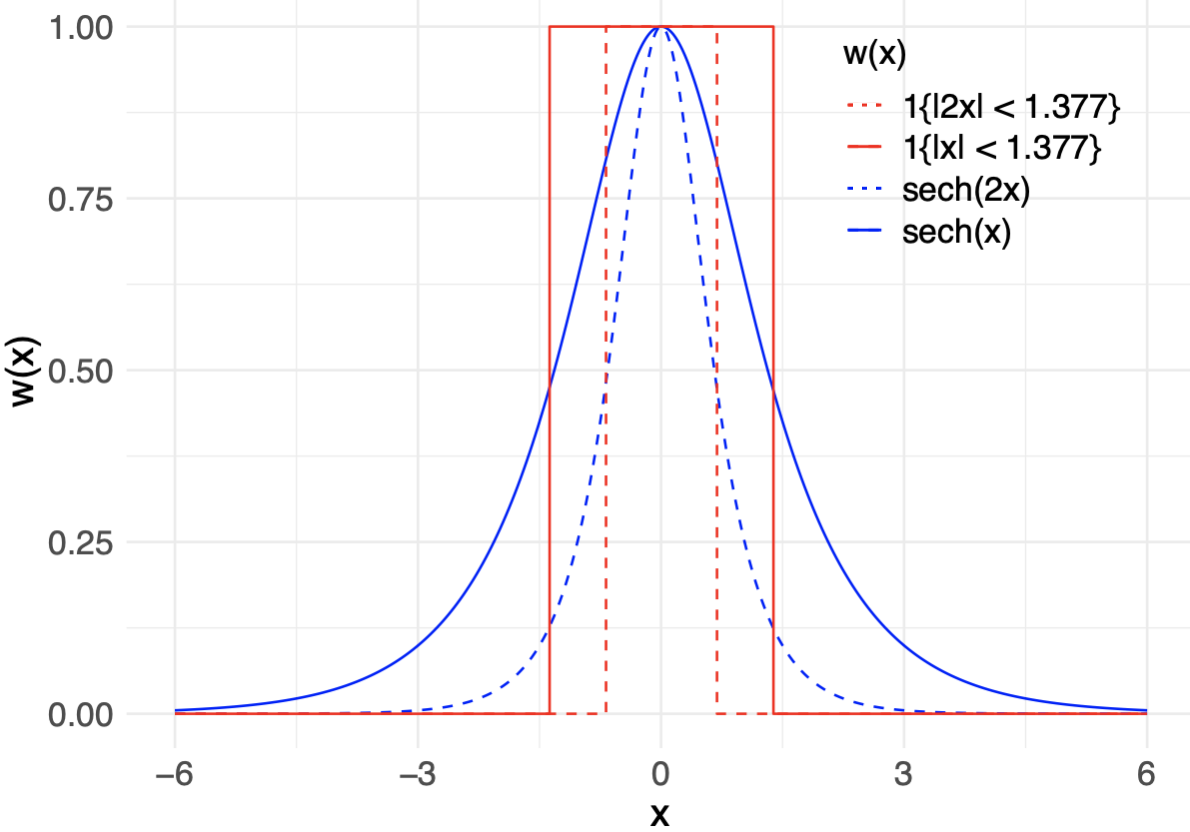}\\
  \caption{The hyperbolic secant function $\sech(xy)$ is plotted (blue) with an indicator function approximation $1\{|x| < \gamma/y\}$ overlaid (red) for $y=1,2$. As $y$ doubles, the range over which the hyperbolic secant function is non-negligible gets halved. } 
  \label{fig:sech-plot}
\end{figure}

\section{Sparse-limit approximation}
\label{sec-sparse-limit-approximation}

In the triple $(X,Y,\A)$, the activity variable $\A$ has a probabilistic definition given $(X,Y)$, 
\begin{align*}
    \mathcal{A}\mid (X,Y) \sim \text{Bernoulli}(\sech(XY)).
\end{align*}
The probabilistic interpretation of activity is less clear-cut than that of null / non-null signals, whose status is indicated by the event $\{X \neq 0\}$.
Sparsity is a limiting operation that enables a threshold approximation for the clar as follows.
For some data-dependent false-signal threshold $\delta(y)>~0$, 
\begin{align}
\label{eq:central-interval}
    \clar(y) \simeq \P\big( |X| \leq \tol(y) \mid Y=y \big),
\end{align}
where the approximation improves as $\rho_1$ becomes small. According to this interpretation, a signal is `active' if $|X|>\delta(y)$ and `inactive' otherwise.

The definition of sparsity used to obtain the threshold-approximation \eqref{eq:central-interval} is motivated by weak convergence of probability measures. The $\clar(y)$ is weakly continuous in the signal distribution $\P(\dd x)$, in the sense that it depends on $\P(\dd x)$ only through the functionals:
\begin{equation}
\label{eq:signal-functionals}
    \begin{aligned}
    %\begin{align*}
    \rho_1(\P) &= \int (1-e^{-x^2/2}) \P(\dd x) \\
    \psi_{1}(\P\; ;y) &= \rho_1^{-1}\phi(y)\int (\cosh(xy)-1) e^{-x^2/2} \P(\dd x),
    %\end{align*}
\end{aligned}
\end{equation}
which are continuous with respect to small changes in $\P(\dd x)$, as measured in any reasonable probability metric. For example, if $\P_1 \approx \P_2 = \mathrm{C}(0.1)$ as in Section \ref{sec:zero_assumptions_inactive} where $\P_1$ has an atom at 0, then $\rho_1(\P_1)$ and $\rho_1(\P_2)$ are both between $6\%$ and $8\%$, and the $\clar(y)$ for $\P_1$ is close to its value for $\P_2$. In contrast, $\lnsr(y)$ depends sensitively on the atom at zero
%for $\P_1$ and its value for $\P_2$ is non-negligible for all $y<3.5$, 
(Panels B and C of Figure \ref{fig:simple_lfdr_simulation}), and gives very different answers for $\P_1$ and $\P_2$.

The two functionals in \eqref{eq:signal-functionals} determine the marginal density
\begin{align*}
    \psi_{\rho_1} = (1-\rho_1)\phi + \rho_1 \psi_1,
\end{align*}
and the sparsity rate~$\rho_1$ quantifies the difference between the distribution of $Y$ and $\mathrm{N}(0,1)$. 
From this perspective, $\P(\dd x)$ can be sparse even when it places no probability mass directly on the origin. A similar notion of sparsity was considered by \cite{rockova2018bayesian}, who proposed a continuous-spike and slab distribution
\begin{align*}
        X \sim (1-\theta)L(\lambda_0) + \theta L(\lambda_1),
    \end{align*}
    where $L(\lambda)$ is the scaled Laplace distribution with density 
    \begin{align}
    \label{eq:laplace}
        f(x) = \frac{1}{2\lambda} e^{-|x|/\lambda}
    \end{align}
    and $\theta \in (0,1)$ is the mixing probability.
In their framework, an `inactive' signal is defined by whether or not $|X|$ exceeds a fixed threshold that depends only on the signal distribution. Several other works that also consider a false-signal threshold are discussed in Section \ref{sec-discussion}.

In this section, we show that when the signal is sparse in the \cite{mccullagh2018statistical} sense, the false-signal threshold $\delta(y)$ in \eqref{eq:central-interval} is inversely proportional to $|y|$. 
To be concrete, for the Cauchy distribution $\mathrm{C}(\sigma)$ with probable error $\sigma$, our results imply % between  $\lidr(y)$ and the posterior probability of a central interval,
\begin{align}
\label{asymp-equiv}
    \P_{\mathrm{C}(\sigma)}\left( |X| \leq 1.3770/|y|\,\mid \, Y=y\right)/\clar(y) \to 1,
\end{align}
as $\sigma \to 0$, holding $\clar(y)$ fixed between zero and one. The multiplicative factor $\gamma_1 := 1.3770$ arises from an approximation of the conditional probability $\P(|X|\leq \delta(y) \mid~Y)$ using the sparse-limit technique, recorded in Section \ref{sec-proofs}.

\subsection{Background on sparsity}
\label{sec-background-sparsity}

The definition of a sparse limit by \cite{mccullagh2018statistical} characterizes the limiting behavior of a sequence of distributions $\P_\nu \to \delta_0$ as mass accumulates near the origin. To motivate the definition, consider the probability that the $\mathrm{Cauchy}(0,\sigma)$ distribution places beyond some fixed threshold $\gamma>0$. This probability scales proportionally with $\sigma$,
\begin{align*}
    \P_{\mathrm{C}(\sigma)}([\gamma,\infty)) \sim \frac{\sigma}{\pi \gamma},
\end{align*}
as $\sigma \to 0$. Dividing this probability by the sparsity rate,
\begin{align*}
    \rho_1 = \int (1-e^{-x^2/2})\P_{\mathrm{C}(\sigma)}(\dd x) \sim \sigma\sqrt{2/\pi},
\end{align*}
yields a non-zero limit
\begin{align*}
    \rho_1^{-1} \P_{\mathrm{C}(\sigma)}([\gamma,\infty)) &\sim 
 \frac{1}{\sqrt{2\pi}\gamma} 
 = \int_\gamma^\infty \H_1(\dd x),
\end{align*}
where $\H_1(\dd x) := \frac{\dd x}{\sqrt{2\pi} x^2}$. 
The definition of a sparse limit requires the above convergence to hold not just for the indicator test function $1\{[\gamma,\infty)\}$, but for all regular functions of order $x^2$ near the origin. For a signal distribution $\P$ to have a sparse limit, the rescaled measure $\rho_1^{-1} \P$ must converge weakly to an exceedance measure $\H$ in the following sense.
\begin{definition}[\cite{mccullagh2018statistical}]
\label{def:sparsity}
    A family of distributions $\{\P_\nu\}$ indexed by $\nu > 0$ has a sparse limit with rate $\rho_1 = \rho_1(\nu) > 0$ and unit exceedance measure $\H$ if
\begin{equation}\label{eq:exceedance_limit}
\lim_{\nu \to 0} \rho_1^{-1} \int_\Real w(x)\, \P_\nu(\dd x) = \int_\Real w(x) \, \H(\dd x) < \infty
\end{equation}
for every bounded continuous function $w$ such that $|x|^{-2} w(x)$ is also bounded, and 
$\int(1 - e^{-x^2\!/2}) \H(\dd x)=~1$.
\end{definition} 
For example, the test function
\begin{align*}
    w(x) &= (\cosh(xy)-1)e^{-x^2/2}
\end{align*}
leading to the zero density component in \eqref{eq:signal-functionals} is continuous, bounded and behaves as $O(x^2)$ near the origin.
Sparsity requires that any pair of sparse signal sequences with the same exceedance measure give the same value for the integral of $w(x)$ 
as $\rho_1 \to 0$. This ensures that signal distributions within the same sparsity class give rise to approximately the same marginal density for $Y$.

The following examples exhibit a range of sparse families and in each case we state the pair $(\rho_1,\H)$ that characterizes the approach of $\P_\nu$ to a point mass at the origin as $\nu \to 0$.

\begin{example}
The first four examples are sparse in the sense of Definition~\ref{def:sparsity}.
\begin{enumerate}[leftmargin=15pt]
    \item The sparse Cauchy family $\P_\nu = \mathrm{C}(\nu)$ has limiting rate $\rho_1 = \nu \sqrt{2/\pi}$ for small~$\nu$, and the normalized exceedance density is $x^{-2}/\sqrt{2\pi}$.
    \item The atom-and-slab family $(1-\nu)\delta_0+ \nu \mathrm{C}(1)$ has rate $\rho_1 \simeq 0.477 \nu$ and normalized exceedance measure $2.097 \mathrm{C}(1)$.
    \item For every $\beta > 0$, the spike-and-slab family $(1-\nu)N(0, \nu^{1+\beta}) + \nu \mathrm{C}(1)$ is sparse with the same rate and exceedance measure as in the previous example. 
     \item The Student-$t$ distribution on $0<\q <2$ degrees of freedom with scale parameter $\nu\to 0$ has a sparse limit with rate and inverse-power exceedance measure
    \begin{equation}
        \begin{aligned}
            \label{eq:inverse-power-H}
        \rho_1 &= \frac{\q^{\q/2}\Gamma\left(\frac{\q+1}{2} \right)}{C_\q\sqrt{\pi}\Gamma(\q/2)} \, \nu^\q, \hspace{1em} \frac{\dd \H_\q}{\dd x} =  \frac{C_\q}{|x|^{\q+1}}, 
        \end{aligned}
    \end{equation}
    where $C_\q := \q\, 2^{\q/2-1}/\Gamma(1-\q/2)$.

\item
The Laplace mixture $(1-\nu)L(\nu^{1/2}) + \nu L(1)$ is a continuous spike-and-slab mixture in the sense of \cite{rockova2018bayesian}, where $L(\lambda)$ is defined in \eqref{eq:laplace}.  The rate integral \eqref{eq:rate} is well-defined and equal to $1.344\nu$ for small~$\nu$.
But the spike contribution is $O(\nu)$ and not negligible, which means that there is no exceedance measure satisfying~\eqref{eq:exceedance_limit}.

\end{enumerate}

\end{example}

\subsection{Conditional probability for a central interval}\label{sec:sparselimit}

As a function of $y$, we show that the threshold in \eqref{eq:central-interval} takes one of two forms depending on whether the exceedance measure is infinite or finite:
\begin{equation}
\label{eq:tol_form}
\arraycolsep=5pt
\tol(y) = \left\{\!\! \begin{array}{cl} \gamma_\alpha / |y| & (0 < \alpha < 2); \\ |\alpha|\log|y| / |y| & (\alpha < 0) .\end{array}
\right.
\end{equation}
A negative activity index\footnote{also known as the regular variation index} $\alpha<0$ implies a finite exceedance measure, and $\gamma_\alpha$ is a constant depending on the behaviour of the exceedance measure near the origin. For example, as already previewed in~\eqref{asymp-equiv}, under the sparse Cauchy model $\P_{\sigma} = \textsc{C}(\sigma)$, $\sigma \to 0$, $\delta(y)$ is characterized by the first branch of~\eqref{eq:tol_form} with $\alpha =1$. As a second example, the sparse atom-and-slab mixture $\P_\nu = (1 - \nu) \delta_0(\de x) + \nu \textsc{C}(1)$ with $\nu \to 0$ has a finite exceedance measure with index $\alpha=-1$ and so $\delta(y)$ is characterized by the second branch in~\eqref{eq:tol_form}.

Under general sparsity assumptions, we show in Theorem \ref{thm-constant-thresholds} that the limiting threshold satisfying \eqref{eq:central-interval} is inversely proportional to~$|y|$. 

\begin{theorem}
\label{thm-constant-thresholds}
    Suppose that $\P_\nu(\de x)$ has a sparse limit with exceedance $H$ and rate $\rho_1 \to 0$ as $\nu \to 0$. For any $\gamma,\omega > 0$, we have
    \begin{align}
    \label{eq:lfdr-equivalence}
        \lim_{\nu \to 0}\{ \P_\nu(|X| < \gamma/|y| \mid Y=y)/ \lidr(y)\} = 1,
    \end{align}
    for any $y \equiv y(\rho_1)$ such that $\rho_1 \zeta_\nu(y) = \omega$ and $|y| = o(\rho_1^{-1/2})$, where $\zeta_\nu(y):= \rho_1^{-1}\int_{\Real} (\cosh(xy) - 1)  e^{- x^2\!/2} \, \P_\nu (dx)$.
\end{theorem}
In one version of this limit, $y$~is fixed as $\rho_1 \to 0$, and $\lidr(y) \to 1$. In the less trivial version, $\rho_1 \to 0$ and $\rho_1 \zeta_\nu(y) = \omega > 0$ is held fixed, so that $\lidr(y)$ has a limit $1/(1 + \omega)$ strictly less than one, and $y$ is of order $O(-\log\rho_1)$. It suffices here to consider only the second version, with $\omega = 0$ corresponding to the trivial case. 

The limiting statement \eqref{eq:lfdr-equivalence} holds for any $\gamma>0$, and in that sense Theorem \ref{thm-constant-thresholds} doesn't identify a unique threshold. A more stringent condition is
\begin{align}
\label{cond:exact-threshold}
    \frac{\P_\nu (|X| < \tol(y) \mid Y=y)}{\clar(y)} - 1 = o(\rho_1).
\end{align}
The exact formula for $\tol(y)$ for which the \eqref{cond:exact-threshold} holds is calculated for a range of exceedance measures $\H(\dd x)$, as summarized in Table \ref{tab:formula-threshold}.

A precise statement of these results can be found in the \hyperref[appn]{Appendix} (Proposition \ref{prop-threshold-form}).
\begin{table}[htbp]
    \centering
    \arraycolsep=8pt
    \renewcommand{\arraystretch}{2} 
    \begin{tabular}{ccccc}
        \( \H(\dd x) \) &  &  & \( \tol(y)  \) & Range\\ 
        \hline 
        $\displaystyle{\frac{\de x} {|x|^{1+\alpha}}}$ &  
        &  & \(\frac{\gamma_\alpha}{|y|}\) & \(0 < \alpha < 2\)\\
        \( |x|^{-1} e^{-\beta|x|}\, dx \) &  &  & \( \frac{\log\log |y|}{|y|}\)  &\(\alpha=0\)\\ 
        $\displaystyle{\frac{\de x}{1+x^2}}$ & & 
        &\( \frac{\log |y|}{|y|}\) & \\ 
        \( |x|^{\alpha-1}e^{-\beta|x|}\,\de x \) & &
        & $ \frac{\alpha \log |y|}{|y|}$ & \(\alpha ,\beta > 0\) \\
        \noalign{\medskip}
        \hline
    \end{tabular}
    \caption{The second column is the threshold $\tol(y)$ satisfying \eqref{cond:exact-threshold} for a range of signal behaviors near the origin.}
    \label{tab:formula-threshold}
\end{table}

The sparse-limit false-discovery threshold for the inverse-power family \eqref{eq:inverse-power-H} is $\tol(y)=\gamma_\alpha/|y|$, where $\gamma_\alpha$ is a constant defined as the solution to the equation $\alpha J_\alpha(\gamma_\alpha)=1$, where $J_{\alpha}$ is a function arising from a series expansion of $\cosh$, 
\begin{align*}
    J_\alpha(x) := \sum_{r=1}^\infty \frac{x^{2r}} {(2r)!\, (2r-\alpha)}.
\end{align*}
Details regarding $J_\alpha$ and its relevance to equation \eqref{cond:exact-threshold} are provided in the \hyperref[appn]{Appendix}. 
The constant $\gamma_\alpha$ is shown for selected values of $\alpha$:
\[\arraycolsep=5pt
\begin{array}{cccccccc}
\alpha &  0.50 & 0.75 & 1.0 & 1.25 & 1.50  \\
\gamma_\alpha &  2.2370 & 1.7383 & 1.3770 & 1.0809 & 0.8120 
\end{array}
\]

The central interval interpretation in general depends on $\alpha$. In principle, we could try to estimate $\alpha$ as in e.g.,~\citet{tresoldi2024sparselimit}. Since in this paper, we do not advocate for any new methodology per se, and since Cauchy or Horseshoe priors~\citep{carvalho2010horseshoe} (which correspond to the index $\alpha=1$) are commonly used in practice, we recommend using the threshold $\delta(y) = 1.377/|y|$ as a rule of thumb for guiding interpretation.

\section{Asymmetry and compatibility}
\label{sec:asymmetry-compatibility}

In this section, we seek to extend our interpretation to asymmetric signal distributions. The first question that arises is that of compatibility: a signal distribution $\P$, not necessarily symmetric, is said to be compatible with the zero density assumption if the marginal distribution of $Y$ admits a two-groups decomposition satisfying this assumption. The following result provides a sharp characterization of when a signal distribution is compatible with the zero density assumption.

\begin{theorem}
\label{thm-compatibility}
    A signal distribution $\P$ is compatible with the zero density assumption if and only if $\int x e^{-x^2/2}\P(\dd x) = 0$, which is equivalent to $\E\{X \mid Y=0\}=0$.\footnote{Here, $\E\{X \mid Y=y\}:= \int x\phi(y-x)\P(\dd x) \big/ \int \phi(y-x)\P(\dd x)$.} In particular, every symmetric distribution is compatible with the zero density assumption.
\end{theorem}

\begin{proof}
    Since the exponential function $y \mapsto \exp(xy)$ is strictly convex for every $x \neq 0$, the density ratio,
    \begin{align*}
        h(y) := \frac{\int \phi(y-x)\P(\dd x)}{\phi(y)} =  \int e^{xy-x^2/2}\P(\dd x)
    \end{align*}
    is positive and strictly convex for all $\P(\cdot) \neq \delta_0(\cdot)$. The minimum $h(y^*) \leq 1$ occurs at a point $y^*$, possibly infinite, such that $h'(y^*)=0$, i.e.
    \begin{align}
    \label{eq:zero-posterior-mean}
        \int xe^{xy^*-x^2/2}\P(\dd x) = 0,
    \end{align}
    or equivalently, $\E\{X \mid Y=y^*\}=0$. The marginal distribution has a unique decomposition
    \begin{align*}
        \phi(y) h(y) &= \phi(y) h(y^*) + \phi(y)\big\{h(y)-h(y^*)\big\} \\
        &= (1-\rho_1) \phi(y) + \rho_1 \psi_{1}(y),
    \end{align*}
where $\rho_0 := h(y^*)$ and 
\begin{align}
\label{def:compatible-active-component}
    \psi_1(y) &:= \rho_1^{-1} \phi(y) \int (e^{xy}-e^{xy^*})e^{-x^2/2} \P(\dd x).
\end{align}
Since $h$ is strictly convex and $\phi$ is strictly positive, $\psi_1$ is zero at $y^*$ and strictly positive elsewhere. 
Therefore, if $\psi_1(0)=0$, then $y^*=0$ and \eqref{eq:zero-posterior-mean} implies $\int xe^{-x^2/2}\P(\dd x)=0$. 

On the other hand, if $\int xe^{-x^2/2}\P(\dd x)=0$, then 
\begin{align*}
    \psi_1(y) &= \rho_1^{-1}\phi(y) \int( e^{xy}-1 )e^{-x^2/2}\P(\dd x) \\
    &= \rho_1^{-1}\phi(y) \int( e^{xy}-xy-1 )e^{-x^2/2}\P(\dd x) \geq 0
\end{align*}
is a valid probability density satisfying  $\psi_1(0)=0$, so $\P$ is compatible with the zero density assumption.

\end{proof}

If $\P$ is symmetric, then $y^*=0$ and the expression \eqref{def:compatible-active-component} recovers the previous formula for the active component \eqref{eq:psi1}. By Theorem \ref{thm-compatibility}, the two-groups decomposition $\psi_{\rho_1} = (1-\rho_1) \phi + \rho_1 \psi_1$ is also valid for asymmetric signal distributions $\P$ satisfying $\E\{X \mid Y=0\}=0$.

This two-groups decomposition is illustrated in Figure \ref{fig:two_groups_asymmetric} for the following three point signal distribution:
\begin{equation}
\P^{\text{3pt}} = 0.8\delta_0 + 0.12 \delta_2 + 0.08 \delta_{\mu},
\label{eq:three_point}
\end{equation}
where $\mu \approx -0.45$ is such that $\E_{\P^{\text{3pt}}}\{X \mid Y=0\}=0$. In this case, $\P$ is asymmetric and so is $\psi_1(y)$. However, $\psi_1(0)=0$, that is, the zero density assumption holds. 

For a signal distribution $\P$ that is compatible with the zero density assumption (but not necessarily symmetric), we can thus also introduce an indicator $\A$ such that $Y \mid \A =1 \sim \psi_1(\cdot)$ corresponds to a draw from the component $\psi_1$. Thus we may also define $\clar(y)$ precisely as in~\eqref{lar}. Moreover, estimation strategies based on the zero density assumption, as described in Section~\ref{sec:zero_assumptions_inactive} furnish estimates of $\clar(y)$. 

However, the interpretation of $\clar(y)$ is not as straightforward for compatible but asymmetric signal distributions. The hyperbolic secant identity of Proposition \ref{prop:hsec} is not applicable, and the probabilistic definition of activity $\P(\A=0 \mid X,Y) = \sech(XY)$ is inconsistent with the two groups model $(1-\rho_1) \phi + \rho_1 \psi_1$, where $\psi_1$ is defined as in \eqref{def:compatible-active-component}. A joint distribution for $(X,Y,\A)$ that is consistent with this two-groups model in the case where $\P$ is asymmetric but compatible is constructed in Appendix \ref{subsec:compatible-extension}. We note that although the hyperbolic secant construction fails for the asymmetric case, the identity in Proposition \ref{prop:exp_identity} still holds as well as the inequality $\P( XY \leq 0 \mid Y=y) \leq \clar(y)$ in~\eqref{eq:modified_lfsr}. The quantity $\P( XY \leq 0 \mid Y=y)$ was used as a notion of local false sign rate by~\citet{ignatiadis2022confidence}, however as noted by~\citet{xie2022discussion}, it is different than the notion of local false sign rate in~\eqref{eq:lfsr}.\footnote{These notions agree for symmetric signal distributions $\P$ and $y \neq 0$, but not for asymmetric $\P$.}

\color{black}

\begin{figure}
\includegraphics[width=0.45\textwidth]{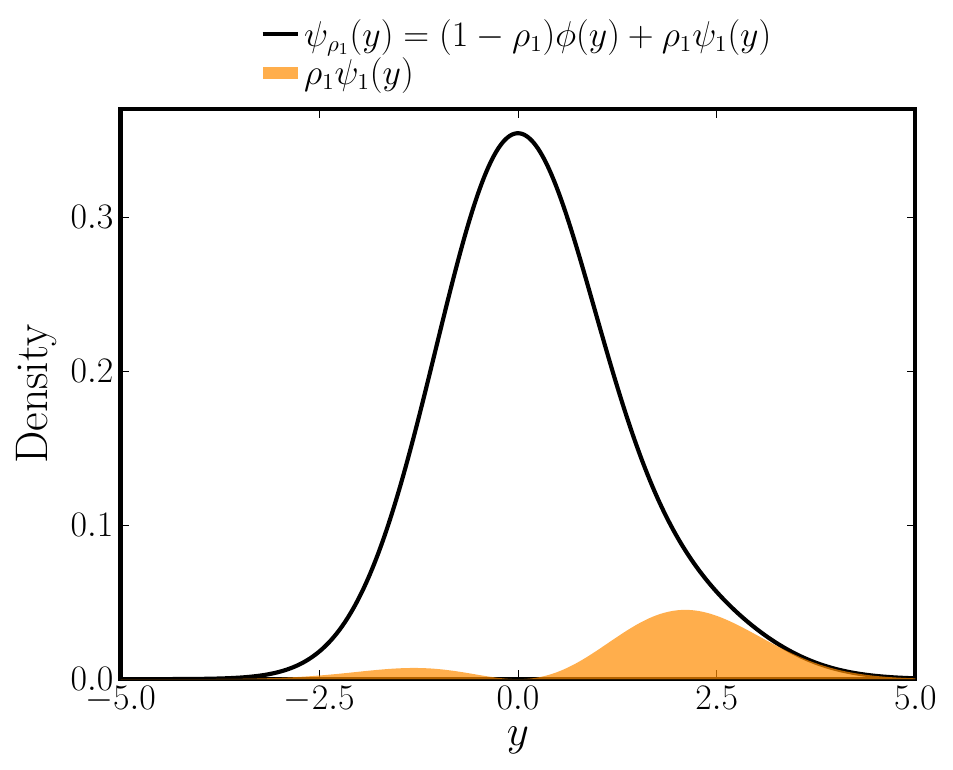} 
\caption{Marginal density of $Y$ under the structural model~\eqref{eq:structural} with the asymmetric signal distribution $\P^{\text{3pt}}$  in~\eqref{eq:three_point}. The plot also shows the contribution $\rho \psi_1(y)$ of the active signal component of the alternative two-groups model in~\eqref{mixture2}. The active component $\psi_1$ is asymmetric and satisfies the zero density assumption $\psi_{1}(0)=0$.}
\label{fig:two_groups_asymmetric}
\end{figure}

\section{Estimating the local null-signal rate}
\label{subsec:ident_vs_estim}

Suppose we have iid observations $Y_i$ drawn from the structural model in~\eqref{eq:structural} where the signal distribution $\P$ is unknown. 
So far we have argued the following: common empirical Bayes workflows building on the zero density assumption do not furnish estimates of the local null-signal rate $\lnsr(y)$ in~\eqref{eq:localnsr}, but instead of the complementary local activity rate, $\clar(y)$ in~\eqref{lar}. But what if substantive interest lies in estimating $\lnsr(y)$?

In principle, we can estimate $\lnsr(y)$. It is well-known \citep[Section 4]{teicher1960mixture} that the signal distribution $\P$ in the structural model~\eqref{eq:structural} is identifiable based on marginal observations $Y$. Since $\P$ is identifiable, it follows that $\pi_1$ and $m_1$ in the null/non-null two-groups model in~\eqref{mixture1} are also identifiable and thus also $\lnsr(y)$.\footnote{
Identifiability does not contradict the alternative decomposition into the inactive/active two-groups model in~\eqref{mixture2}. The reason is that $\psi_1$ in~\eqref{mixture2} may not be written as the convolution of a distribution $\P'$ with $\mathrm{N}(0,1)$, while $m_1$ in~\eqref{mixture1} can.
}
Identifiability of $\lnsr(y)$ however does not imply that it can be estimated based on finite data and the next proposition shows that uniformly consistent estimation of $\lnsr(y)$ is impossible.
\begin{proposition}
\label{prop:practically_unidentified}
Fix $y \in \R$, $0<\eta < 1/\surd{(2\pi)}$, and let $\mathcal{P}_{y,\eta} := \{ \P \text{ symmetric distribution}: \int \phi(y-x)\P(\dd x) > \eta\}$. 
Then, there exists $\varepsilon > 0$ such that:
$$\liminf_{n \to \infty} \inf_{\widehat{\lnsr}_n} \sup_{\P \in \mathcal{P}_{y,\eta}} \P( |\widehat{\lnsr}_n(y) - \lnsr(y)| > \varepsilon) > 0.$$
The infimum is taken over all measurable functions $(Y_1,\dotsc,Y_n) \mapsto \widehat{\lnsr}_n(y; Y_1,\dotsc,Y_n)$.
\end{proposition}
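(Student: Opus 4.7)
The plan is to apply the classical two-point Le Cam method, exploiting the fact that the local null-signal rate is a discontinuous functional of $P$: it can drop from a positive value to zero under a perturbation of $P_0$ that is arbitrarily small in total variation after convolution with the Gaussian noise.

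\textbf{Construction.} First, pick any $P_0 \in \mathcal{P}_{y,\eta}$ with a strictly positive atom at zero, $p_0 := P_0(\{0\}) > 0$; for typical $(y,\eta)$ this is achieved by a distribution of the form $p_0 \delta_0 + (1-p_0) Q$ with $Q$ symmetric and chosen so that $m_{P_0}(y) > \eta$. Writing $P_0 = p_0 \delta_0 + (1-p_0)R$ with $R$ symmetric and supported on $\R \setminus \{0\}$, define the perturbed alternative
\[
P_1^{(k)} := p_0 \nu_k + (1-p_0)R, \qquad \nu_k := \tfrac{1}{2}(\delta_{1/k} + \delta_{-1/k}).
\]
By construction $P_1^{(k)}$ is symmetric and has no atom at zero, so $\lnsr_{P_1^{(k)}}(y) = 0$, whereas $\lnsr_{P_0}(y) = p_0 \phi(y) / m_{P_0}(y) > 0$ is a fixed constant. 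Set $\varepsilon := \lnsr_{P_0}(y)/3$. Next, note that $m_{P_1^{(k)}}(y) \to m_{P_0}(y) > \eta$ as $k \to \infty$ by continuity of $\phi$, so $P_1^{(k)} \in \mathcal{P}_{y,\eta}$ for all $k$ sufficiently large.

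\textbf{Indistinguishability of marginals.} The two marginal densities differ only through the replacement of $\phi$ by $\tfrac12(\phi(\cdot - 1/k) + \phi(\cdot + 1/k))$ in the atom contribution, so
\[
\|m_{P_0} - m_{P_1^{(k)}}\|_{\TV} \le p_0 \,\bigl\|\phi - \tfrac12 (\phi(\cdot - 1/k) + \phi(\cdot + 1/k))\bigr\|_{\TV} \le C/k
\]
for a constant $C$ depending on the standard Gaussian (using that total variation between translates of $\phi$ by an amount $\tau$ is $O(\tau)$). By the standard coupling bound for product measures, $\|m_{P_0}^n - m_{P_1^{(k)}}^n\|_{\TV} \le n C / k$.

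\textbf{Conclusion via Le Cam.} Since $\lnsr_{P_0}(y)$ and $\lnsr_{P_1^{(k)}}(y) = 0$ differ by $3\varepsilon$, the events $\{|\widehat{\lnsr}_n(y) - \lnsr_{P_i}(y)| \le \varepsilon\}$ for $i=0,1$ are disjoint, and so for any estimator $\widehat{\lnsr}_n$,
\[
\P_{P_0^n}(|\widehat{\lnsr}_n - \lnsr_{P_0}(y)| > \varepsilon) + \P_{P_1^{(k)n}}(|\widehat{\lnsr}_n - \lnsr_{P_1^{(k)}}(y)| > \varepsilon) \ge 1 - \|m_{P_0}^n - m_{P_1^{(k)}}^n\|_{\TV}.
\]
For each $n$, choose $k_n := \lceil 4 n C \rceil$, so the TV is at most $1/4$; then the supremum is bounded below by $3/8$ uniformly in $n$, proving the claim.

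\textbf{Main obstacle.} The essential idea — that erasing a point mass at zero by replacing it with a diffuse distribution near zero changes $\lnsr$ discontinuously but leaves the marginal essentially untouched — is conceptually simple. The technical points requiring care are (i) verifying that a distribution $P_0 \in \mathcal{P}_{y,\eta}$ carrying a sizable atom at zero actually exists for the given $(y,\eta)$ (which implicitly uses $\eta < 1/\sqrt{2\pi}$), and (ii) the quantitative TV estimate $\|\phi - \phi(\cdot - 1/k)\|_{\TV} = O(1/k)$, which underlies the choice of $k_n$ and the tensorization step.
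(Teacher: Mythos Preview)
Your proof is correct and follows essentially the same approach as the paper: both replace the atom at zero by a symmetric two-point mass $\tfrac12(\delta_\xi+\delta_{-\xi})$, observe that this drives $\lnsr$ to zero while perturbing the marginal only by $O(\xi)$ in total variation, and conclude via Le Cam's two-point method with $\xi=\xi_n\to 0$. Your version is in fact more explicit than the paper's (you specify $\varepsilon$, the tensorization bound, and the choice of $k_n$), but the construction and the logic are identical.
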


A proof is provided in Section \ref{sec-minimax-proof}.  Although we are not aware of a formal statement in the literature, the result of the proposition is well-known~\citep{carpentier2019adaptive}. The intuition is that after convolution with normal noise, a Dirac point mass at $0$ is statistically indistinguishable from a concentrated continuous spike around $0$. By contrast, $\clar(y)$ can be estimated with fast rates without any further assumptions, see Appendix~\ref{subsec:estimating_clar}. We view this as a strong argument for preferring $\clar(y)$ over $\lnsr(y)$, especially since $\clar(y)$ is already being estimated by common software packages.

We conclude this section by describing some options if $\lnsr(y)$ is indeed the quantity of interest.
\begin{itemize}[leftmargin=*]
    \item First, one may impose stronger assumptions on the signal distribution. As one example, one may assume that there exists a separation between null and non-null signals, i.e., that there exists $M>0$ such that $\P( |X| \geq M \mid X \neq 0) = 1$, or one may assume an analogous condition in the Fourier domain. If separation is sufficiently large (in a way that may depend on $n$), then uniformly consistent estimation of $\lnsr(y)$ is possible as shown by~\citet*{cai2007estimation} and~\citet{ jin2008proportion}.
    \item One may also make further assumptions on the non-null component $\P(\cdot \mid X \neq 0)$, e.g., that $\P(\cdot \mid X \neq 0)$ is known exactly~\citep{johnstone2004needles}, takes on a parametric~\citep{muralidharan2010empirical} or semiparametric form~\citep{martin2012nonparametric}, or obeys certain shape constraints such as unimodality~\citep{stephens2017false}.
    \item One may also still aim at conservative inference of $\lnsr(y)$ by suitable regularization as in~\citet{stephens2017false, xie2022discussion}  or by constructing lower confidence intervals for $\pi_1$ (and thus upper confidence intervals for $\lnsr(y)$) as in~\citet{meinshausen2006estimating, ignatiadis2022confidence, xue2023nonparametric}.\footnote{The approach in~\citet{meinshausen2006estimating} yields intervals that include $\rho_1$, respectively $\clar(y)$, with high probability, since it operates directly on the structure-agnostic two-groups model in~\eqref{eq:generic_two_groups_model}. The approaches in~\citet{ignatiadis2022confidence, xue2023nonparametric} can take the structural model in~\eqref{eq:structural} as the primitive, and so may yield shorter intervals.}
\end{itemize}

\section{Discussion}
\label{sec-discussion}

Our interpretation of the two two-groups models and local false discovery rate notions (local null-signal rate and complementary local activity rate) rely on the structural model~\eqref{eq:structural}. We briefly discuss the implication of these modeling assumptions and relate our results to the broader literature.

\paragraph*{On the structural model:} It is common in the literature on multiple testing and local false discovery rates, to only model the distribution of the test statistics $Y$ conditional on  $X=0$. The distribution of $Y \mid X=x$ for $x \neq 0$ may be left unspecified, which can be convenient for the data analyst. However, with such an approach, there is no obvious way to deduce anything about the signal $X$: if we aim to say something about the signal and its strength, we must start with a signal distribution and its relation with the response as in the structural model in~\eqref{eq:structural}.

There is scope for generalizing our results beyond the structural model in~\eqref{eq:structural}. For example, it is plausible 
that $\V(x) := \Var(Y \mid X=x) \neq 1$ for $x \neq 0$, see e.g.,~\citet{efron2010correlated}. Then, one could posit that $Y \mid X \sim \mathrm{N}(X, \V(X))$. If $\V(\cdot) > 0$ is such that $\V(0) = 1$, $\V(-x)=\V(x)$, then most of our results continue to be applicable with suitable modifications. For example, the probability of an active signal ($\A=1$) should be taken equal to $1-\exp\{-X^2/2 \V(X)\}$ in lieu of $1-\exp(-X^2/2)$ in~\eqref{eq:activity_given_x}. 

\paragraph*{On symmetry and two-sided p-values:} Our assumption of symmetry of the signal distribution $\P$ in~\eqref{eq:structural} is strong; we relaxed it in Section~\ref{sec:asymmetry-compatibility}. However, the predominant approach to testing in practice starts with the computation of the two-sided p-value \smash{$P_{\text{val}} :=  2\Phi(-|Y|)$}, where $\Phi$ is the standard normal distribution function. Indeed, in our computation of the estimated `local false discovery rates' with \texttt{qvalue} and \texttt{fdrtool} in the simulation of Section~\ref{sec:zero_assumptions_inactive}, we first transformed each $Y_i$ to a two-sided p-value.\footnote{\texttt{locfdr} and \texttt{tailinflation} operate directly on $Y_i$ and their estimates are not invariant to sign changes of the $Y_i$.} Under the structural model~\eqref{eq:structural}, $P_{\text{val}}$ has the following Lebesgue density conditional on $X$,
\begin{equation}
\begin{aligned}
\label{eq:pvalue_density}
f(p \mid x) = \frac{\phi(q_{p/2}+x) +\phi(q_{p/2}-x) }{2 \phi(q_{p/2})},\;\; 
\end{aligned}
\end{equation}
where $q_{p/2}$ is the $(1-p/2)$-quantile of the standard normal distribution and $p\in[0,1]$. For $x=0$,~\eqref{eq:pvalue_density} is just the density of the $\mathrm{Uniform}(0,1)$ distribution. All our results stated in terms of $Y$ may be stated in terms of the two-sided p-value $P_{\text{val}}$. Importantly, the conditional density of the p-value is symmetric in $x$, that is, $f(p \mid x) = f(p \mid -x)$. Thus, the implication of our results for $P_{\text{val}}$ do not hinge on the symmetry of the signal distribution $\P$ as we may symmetrize $\P$ without loss of generality.

\paragraph*{Further connections to related work:}  Several authors have argued in favor of replacing or complementing the local null-signal rate, $\lnsr(y)$~\eqref{eq:localnsr}, with $\P(|X| < \delta \mid Y=y)$ for a fixed (pre-specified) $\delta$~\citep*{ruppert2007exploring,vandewiel2007estimating,morris2008comment}. Our findings in the sparse-limit demonstrate that 
\begin{align*}
    \lidr(Y) \approx \P(|X| < \delta(Y) \given Y).
\end{align*}
While in both cases, it is clearly understood that zero signals and false discoveries are not synonymous, the definitions are not identical. The distinction lies in whether the threshold to categorize a signal as false is constant or varies based on the observation $Y$. \\

Our work naturally raises the following question: is there a direct probabilistic interpretation, analogous to our interpretation of the local false discovery rate, of Efron's~\citeyearpar{efron2004largescale} empirical null modeling strategy?\\

\subsubsection*{Reproducibility:} We provide code to reproduce Figures~\ref{fig:two_groups},~\ref{fig:simple_lfdr_simulation} and~\ref{fig:two_groups_asymmetric} on Github, under the repository:\\ 
\texttt{https://github.com/nignatiadis/local-\\
fdr-interpretation-paper}

\bibliographystyle{plainnat}
\bibliography{reference}

\begin{appendix}

\section{Appendix}
\label{appn}
\label{sec-proofs}

\begin{proof}[Continued proof of Theorem \ref{theo:optim}]
    Suppose for the sake of contradiction that there exists $\rho'<\rho$ and $\psi$ for which 
    \begin{align*}
        m(y)=(1-\rho')\phi(y)+\rho' \psi(y).
    \end{align*}
    Substituting $m(y)=(1-\pi_1)\phi(y)+\pi_1 m_1(y)$, we have
    \begin{align*}
        \psi(0) = \frac{(\rho'-\pi_1) \phi(0)  + \pi_1m_1(0)}{\rho'}.
    \end{align*}
    By assumption, $\rho' < \rho = \pi_1(1-m_1(0)/\phi(0))$. This implies
    \begin{align*}
        (\rho'-\pi_1)\phi(0)+\pi_1 m_1(0) < 0,
    \end{align*}
    a contradiction.
\end{proof}

\subsection{Proofs under the sparse limit approximation}
\begin{proof}[Proof of Theorem \ref{thm-constant-thresholds}]

    The conditional probability $\P(|XY| < \gamma \given Y)$ may be split into active and inactive components
\begin{align*}
&=
\frac{\phi(y)}{\psi_\rho(y)}\int_{|x| < \e} \cosh(xy) e^{-x^2\!/2} \P(\dd x), \\
   &=\frac{\phi(y)}{\psi_{\rho}(y)}\left(\int_{\Real} e^{-x^2\!/2} \P(\dd x)  - 2\rho I_0 + 2 \rho I_1\right) , \\
&= \frac{(1-\rho)\phi(y)}{\psi_{\rho}(y)} \cdot \frac{1-\rho - 2\rho I_0 + 2 \rho I_1}{1-\rho} ,
\end{align*}
where $I_0, I_1$ are defined in \eqref{def-i0-i1} and $\e=\gamma/|y|$. Since $e^{-x^2/2}\leq 1$,
\begin{align*}
    I_0 &= \rho^{-1} \int_{\e}^{\infty} e^{-x^2/2} \P(\dd x) \leq \rho^{-1} \P(|X|>\e).
\end{align*}
Since $\e \to 0$ as $\rho \to 0$, Markov's inequality implies 
\begin{align*}
    \P(|X|\wedge 1 \geq \e) 
    \leq \frac{\rho \int (x^2 \wedge 1) \H(\dd x)}{\e^2}+o(1),
\end{align*}
by \eqref{def:sparsity}, which implies $\rho I_0 \leq O(\rho/\e^2) = O(\rho |y|^2)$, which goes to zero. Next, notice that for some $C>0$, we have
\begin{align*}
    \cosh(xy)-1\leq C(xy)^2\text{ for all } x \in (0,\e)
\end{align*} 
because $\e y = \gamma$ a fixed constant; for instance $C = \cosh(\gamma)-1$ implies the above inequality. This implies
\begin{align*}
    I_1 &\sim \int_{0}^{\e} (\cosh(xy)-1) e^{-x^2/2} \H(\dd x) \leq O(y^2) 
\end{align*}
as $y \to \infty$. Thus, provided that $y = o(\rho^{-1/2})$, the limiting ratio in \eqref{eq:lfdr-equivalence} is one for all $\gamma$. 
    
\end{proof}

\begin{proposition}
\label{prop-threshold-form}
Suppose $\P(\dd x)$ tends to $\H(\dd x)$ as $\rho \to 0$, and $y = O(\sqrt{\log \rho^{-1}}) \to \infty$. It holds that 
$$
 \P\left( |X| < \tol(y) \mid Y=y \right) = \frac{(1-\rho)\phi(y)}{\psi_{\rho}(y)}(1+o(\rho)),
$$
for all  pairs $(H, \tol(\cdot))$ defined below:
\begin{enumerate}[leftmargin=15pt]
\item $\H(\dd x) = \de x/|x|^{1+\alpha}$ and $\tol(y)= \gamma_\alpha/|y|$, where $\gamma_\alpha$ is the solution to $J_\alpha(\gamma_\alpha) = 1/\alpha$.

\item $\H(\dd x) = |x|^{-1}e^{-\beta |x|} \de x$ and
\begin{align*}
    \tol(y) = \frac{1}{|y|}\log\left(\left(\log \frac{|y|}{\log\log |y|}\right)\cdot 2\log\log |y|\right).
\end{align*}
\item $\H(\dd x) = \dd x/(1+x^2)$ and $\tol(y)=\frac{1}{|y|}\log(2I_0|y|)$.
\item $\H(\dd x) = x^{\alpha-1} e^{-\beta x} \dd x$ for some $\alpha,\beta>0$ and
\begin{align*}
    \tol(y) =\frac{1}{|y|} \log \frac{c |y|^\alpha}{(\alpha\log |y|)^{\alpha-1}},
\end{align*}
where $c = c(\alpha, \beta) > 0$ is constant in $y$.
\end{enumerate}
\end{proposition}

\begin{proof}[Proof of Proposition \ref{prop-threshold-form}]

The conditional density of $|X|$ given $Y$ at $x>0$ is equal to
\begin{align*}
    &= \frac{\phi(y)}{\psi_{\rho}(y)} e^{xy-x^2/2}\P(\dd x) + \frac{\phi(y)}{\psi_{\rho}(y)} e^{-xy-x^2/2}\P(\dd x) \\ 
    &= \frac{2\phi(y)}{\psi_{\rho}(y)}e^{-x^2/2}\cosh(xy) \P(\dd x),
\end{align*}
which can equivalently be expressed as the mixture
\begin{align*}
\frac{2\phi(y)}{\psi_{\rho}(y)}\left(e^{-x^2/2}\P(\dd x) + (\cosh(xy)-1)e^{-x^2/2}\P(\dd x) \right).
\end{align*}
It follows that for any threshold $\tol(y)$, the conditional probability $\P(|X| \leq \tol(y) \mid Y=y)$ is
\begin{align*}
    &= \frac{2\phi(y)}{\psi_{\rho}(y)} \int_0^{\tol(y)} e^{-x^2/2} + (\cosh(xy)-1)e^{-x^2/2}\P(\dd x).
\end{align*}
From this we deduce that the condition \eqref{cond:exact-threshold} holds when $I_0 - I_1 \to 0$ as $\rho \to 0$, where
\begin{equation}
    \label{def-i0-i1}
    \begin{aligned}
    I_0 &:= \rho^{-1}\int_{\tol(y)}^\infty e^{-x^2/2} \P(\dd x),\\
    I_1 &:= \rho^{-1} \int_0^{\tol(y)} (\cosh(xy)-1) e^{-x^2/2}\P(\dd x).
\end{aligned}
\end{equation}
By \eqref{eq:exceedance_limit}, the integrals above have sparse-limit approximations
\begin{align*}
I_0 &= \int_{\tol(y)}^\infty e^{-x^2\!/2} \H(\dd x) + o(1),\\
I_1 %&= \rho^{-1} \int_0^{\tol(y)} \bigl(\cosh(xy) - 1) e^{-x^2\!/2} \P(\dd x) \\
&=\int_0^{\tol(y)} \bigl(\cosh(xy) - 1) e^{-x^2\!/2} \H(\dd x)+o(1),
\end{align*}
as $\rho \to 0$. 
In what remains, the following function arises 
\[
J_\alpha(x) = \sum_{r=1}^\infty \frac{x^{2r}} {(2r)!\, (2r-\alpha)}
\]
for $\alpha < 2$. Note that  $J_\alpha(x)$ is increasing as a function of $\alpha$, the behaviour for large~$x$ in all cases is $e^{x}/(2x)$, and $J_{-1}(x) = \sinh(x)/x - 1$. For each of the four cases in the statement of Proposition \ref{prop-threshold-form}, we show $I_0-I_1 \to 0$. 
    \begin{enumerate}[leftmargin=15pt]
        \item We have
        \begin{align*}
            I_0 \sim \int_{\tol(y)}^\infty e^{-x^2/2} \frac{1}{x^{\alpha+1}} \de x \leq \int_{\tol(y)}^\infty x^{-\alpha-1} \de x \sim \frac{\tol^{-\alpha}}{\alpha}.
        \end{align*}
        The same expression holds as a lower bound,
        \begin{align*}
            I_0 &\sim \int_{\tol(y)}^\infty e^{-x^2/2} \frac{1}{x^{\alpha+1}} \de x \\
            &\geq \int_{\tol(y)}^1 (1-x^2/2) \cdot x^{-\alpha-1} \de x \sim \frac{\tol^{-\alpha}}{\alpha}, 
        \end{align*}
        since $\tol \to 0$. Next, we have
        \begin{align*}
            I_1 &\sim \int_0^{\tol(y)} (\cosh(xy)-1) e^{-x^2/2} \cdot x^{-\alpha-1} \de x \\
            &\sim \int_0^{\tol(y)} \sum_{n=1}^\infty \frac{(xy)^{2n}}{(2n)!} \cdot x^{-\alpha-1} \de x \\
            &= \sum_{n=1}^\infty \frac{y^{2n}}{(2n)!} \int_0^{\tol(y)} x^{2n-\alpha-1} \de x \\
            &= \tol^{-\alpha} \sum_{n=1}^\infty \frac{(\tol y)^{2n}}{(2n)!(2n-\alpha)} = \tol^{-\alpha} J_{\alpha}(\tol y).
        \end{align*}
        It follows that $I_1 \sim \tol^{-\alpha} J_\alpha(\gamma_\alpha) \sim \tol^{-\alpha} /\alpha \sim I_0$, so that $I_0 - I_1 \to 0$ as $\rho \to 0$.

\item We have
        \begin{align*}
            I_0 &\sim \int_{\tol(y)}^\infty e^{-x^2/2} x^{-1} e^{-\beta x} \de x \leq %\int_{\tol(y)}^\infty x^{-1} \de x = 
            - \log \tol(y).
        \end{align*}
        Also note that $\int_{\tol(y)}^\infty e^{-x^2/2-\beta x} x^{-1} \de x$ is 
        \begin{align*}
            &\geq \int_{\tol(y)}^1 (1-x^2/2-\beta x) x^{-1} \de x              \sim -\log \tol(y).
        \end{align*}
        Next, 
        \begin{align*}
            I_1
            &\sim \int_0^{\tol(y)} (\cosh(xy)-1) \cdot e^{-x^2/2-\beta x}x^{-1} \de x \\
            &\sim \sum_{n=1}^\infty \frac{y^{2n}}{(2n)!} \int_0^{\tol(y)} x^{2n-1} \de x \sim J_0(y\tol(y)).
        \end{align*}
        Now using $J_0(x) \sim \frac{e^x}{2x}$ for large $x$ and noting that $y\tol(y) \sim \log \log |y|$ as $|y|\to \infty$, we have
        \begin{align*}
            I_1 \sim \frac{e^{y\tol(y)}}{2y\tol(y)} \sim \log \frac{|y|}{\log\log |y|} %\sim -\log \frac{f(y)}{|y|}\\
            &\sim -\log\tol(y) \sim I_0,
        \end{align*}
        so that $I_0-I_1 \to 0$ as $\rho \to 0$.

\item As $\rho \to 0$, since $\tol(y) \to 0$, we have
        \begin{align*}
            I_0 &= \rho^{-1} \int_{\tol(y)}^{\infty} e^{-x^2/2} \P (\de x) \to \int_{0}^\infty  \frac{e^{-x^2/2}}{1+x^2}\de x
        \end{align*}
        which equals $0.822$. Since $\lim_{x\to 0}\frac{e^{-x^2/2}}{1+x^2}= 1$, % as $x \to 0$,
        \begin{align*}
            I_1 %&= \rho^{-1} \int_0^{\tol(y)} (\cosh(xy)-1) e^{-x^2/2} \P (\de x) \\
            &\sim \int_0^{\tol(y)}(\cosh(xy)-1) \cdot \frac{ e^{-x^2/2}}{1+x^2} \de x \\
            &\sim \int_0^{\tol(y)} \sum_{n=1}^\infty \frac{(xy)^{2n}}{(2n)!} \de x =\tol(y) \sum_{n=1}^\infty \frac{(y\tol (y))^{2n}}{(2n)!(2n+1)}, % = \tol(y) J_{-1}(y\tol( y)).
        \end{align*}
        equal to $\tol(y) J_{-1}(y\tol(y)) \sim 0.822$, since
        \begin{align*}
            \tol(y) J_{-1}(y\tol(y)) %&= \frac{f(y)}{|y|} J_{-1}(f(y)) \\
            &\sim \tol(y) e^{y\tol(y)}/(2y\tol(y)) %= \frac{e^{f(y)}}{2|y|} \\
            = I_0.
        \end{align*}
        
        \item Since $\tol(y) \to 0$ as $\rho \to 0$, we have
        \begin{align*}
            I_0 
            &= \rho^{-1} \int_{\tol(y)}^\infty \P(\dd x) - \rho^{-1} \int_{\tol(y)}^\infty (1-e^{-x^2/2})\P(\dd x) 
            \end{align*}
            \begin{align*}
            &\to c/2 := \Gamma(\alpha)/\beta^\alpha - \int_0^\infty (1-e^{-x^2/2})x^{\alpha-1}e^{-\beta x} \de x
        \end{align*}
        by the sparse limit approximation \eqref{def:sparsity}. 
        Next,
        \begin{align*}
            I_1 
            &\sim \int_0^{\tol(y)} (\cosh(xy)-1)e^{-x^2/2} x^{\alpha-1}e^{-\beta x} \de x \\
            &\sim \int_0^{\tol(y)} \sum_{n=1}^\infty \frac{y^{2n}}{(2n)!} x^{2n+\alpha-1} e^{-x^2/2-\beta x} \de x \\
            &= \tol(y)^{\alpha} \sum_{n=1}^\infty \frac{(y\tol(y))^{2n}}{(2n)!(2n+\alpha)} = \tol(y)^\alpha J_{-\alpha}(y\tol(y)).
        \end{align*}
        Now since $J_{-\alpha}(x) \sim \frac{e^x}{2x}$ as $x \to \infty$, we have
        \begin{align*}
            I_1 &\sim \tol(y)^\alpha e^{y\tol(y)}/(2y\tol(y)) \sim c/2,
        \end{align*}
        since $\log \frac{c y^\alpha}{ (\alpha\log y)^{\alpha-1}} \sim \alpha \log y$
        as $y \to \infty$. 
    \end{enumerate}
\end{proof}

\subsection{Proof that uniform consistency for local null-signal rate is impossible}
\label{sec-minimax-proof}
\begin{proof}[Proof of Proposition~\ref{prop:practically_unidentified}]
Take any $\P \in \mathcal{P}_{y,\eta}$ such that $\tilde{\pi}_0 := \P(X=0)>0$. Then consider the family of perturbed distributions, parameterized by $\xi > 0$,
$$ \P_{\xi} := \P + \tilde{\pi}_0(\delta_{\xi}/2 + \delta_{-\xi}/2 -  \delta_0).$$
For some $\bar{\xi} > 0$, it will hold that $\P_{\xi} \in \mathcal{P}_{y,\eta}$ for all $0<\xi \leq \bar{\xi}$. The total variation distance of the marginals satisfies
$$
\begin{aligned}
&\TV( \mathrm{N}(0,1)* \P,\, \mathrm{N}(0,1)* \P_{\xi}) \\
&= \frac{1}{2}\int \left|  \int \phi(y-x)\{d\P(x)-d\P_{\xi}(x)\} \right | dy = O(\xi).
\end{aligned}
$$
Next note that $\lnsr_\P(y) > 0$, while $\lnsr_{\P_{\xi}}(y) =0$, that is, 
$$\lnsr_\P(y) - \lnsr_{\P_{\xi}}(y) = \lnsr_\P(y) > 0,$$ 
for all $\xi >0$.
The conclusion follows by Le Cam's two-point method upon taking $\xi = \xi_n \searrow 0$ sufficiently fast, see e.g.,~\citet[Theorem 2.2]{tsybakov2008introduction}.
\end{proof}

\subsection{Estimation of complementary local activity rate}
\label{subsec:estimating_clar}

\begin{proposition}
\label{prop:clar_easy_to_estimate}
Take any sequence $\varepsilon_n \to 0$ with $n \varepsilon_n^2 / \surd{\log n} \to \infty$. Then there exists an estimator of $\clar(y)$, $\widehat{\lidr}_n(y)= \widehat{\lidr}_n(y; Y_1,\dotsc,Y_n)$, based on independent observations $Y_1,\dotsc,Y_n$ from model~\eqref{eq:structural} such that:
$$\limsup_{n \to \infty} \sup_{\P \in \mathcal{P}_{y,\eta}} \P( |\widehat{\lidr}_n(y) - \lidr(y)| > \varepsilon_n) = 0.$$
\end{proposition}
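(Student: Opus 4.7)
My plan hinges on the following algebraic simplification. From~\eqref{eq:rate},
\begin{equation*}
1-\rho_P \;=\; \int e^{-x^2/2}\,P(dx) \;=\; \sqrt{2\pi}\int\phi(x)\,P(dx) \;=\; \sqrt{2\pi}\,m_P(0),
\end{equation*}
where I have used that $\phi$ is even. Substituting into~\eqref{lar} yields the pointwise identity
\begin{equation*}
\clar_P(y) \;=\; \frac{\sqrt{2\pi}\,\phi(y)\,m_P(0)}{m_P(y)} \;=\; e^{-y^2/2}\,\frac{m_P(0)}{m_P(y)},
\end{equation*}
so $\clar_P(y)$ is merely a ratio of two point values of the marginal density of the data. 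These quantities are directly estimable from $Y_1,\dotsc,Y_n$, so the hard identifiability issues facing $\lnsr_P(y)$ simply do not arise.

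Given the identity, the strategy is to plug in a pointwise estimator for $m_P$. The marginal $m_P = \phi \star P$ is supersmooth: its characteristic function is bounded in modulus by $e^{-t^2/2}$ uniformly in $P$. I would use the sinc-truncated Fourier density estimator
\begin{equation*}
\hat{m}_n(u) \;=\; \frac{1}{\pi n}\sum_{j=1}^n \frac{\sin\bigl(T_n(u-Y_j)\bigr)}{u-Y_j}, \qquad T_n \;=\; \sqrt{2\log n}.
\end{equation*}
By Fourier inversion, the bias at any fixed $u$ is uniformly bounded in $P$ by $(2\pi)^{-1}\int_{|t|>T_n}e^{-t^2/2}\,dt = O\bigl((n\sqrt{\log n})^{-1}\bigr)$, and Parseval plus the uniform bound $\|m_P\|_\infty \leq \phi(0)$ controls the variance by $\|m_P\|_\infty T_n/(\pi n) = O(\sqrt{\log n}/n)$. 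Chebyshev's inequality then gives
\begin{equation*}
\sup_{P \in \mathcal{P}_{y,\eta}}\,\P\!\bigl(|\hat{m}_n(u) - m_P(u)| > \varepsilon_n\bigr) \;\longrightarrow\; 0
\end{equation*}
at both $u=y$ and $u=0$, precisely under the assumption $n\varepsilon_n^2/\sqrt{\log n} \to \infty$.

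I would then set the plug-in estimator $\widehat{\lidr}_n(y) := \min\bigl\{1,\, e^{-y^2/2}\,\hat{m}_n(0)\,/\,\max(\hat{m}_n(y),\eta/2)\bigr\}$. Since $m_P(0) \leq \phi(0)$ is uniformly bounded above and $m_P(y) > \eta$ is uniformly bounded away from zero on $\mathcal{P}_{y,\eta}$, a one-line expansion of the ratio $\hat{m}_n(0)/\hat{m}_n(y)$ around $m_P(0)/m_P(y)$ converts the uniform rates on the numerator and denominator into the same uniform rate on $\widehat{\lidr}_n(y)$, giving the conclusion. The main obstacle is really just the pointwise identity in the first paragraph; once it is in hand, the rest is routine supersmooth density estimation, and uniformity over $P$ causes no difficulty because the Gaussian bound on the characteristic function of $m_P$ is itself $P$-free.
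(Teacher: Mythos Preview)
Your proposal is correct and takes essentially the same approach as the paper: the paper's estimator $\phi(y)\sum K(Y_i/h_n)\big/\{\phi(0)\sum K((Y_i-y)/h_n)\}$ with the sinc kernel and $h_n=1/\sqrt{\log n}$ is exactly your plug-in $e^{-y^2/2}\hat m_n(0)/\hat m_n(y)$ in disguise, resting on the same identity $\clar_P(y)=e^{-y^2/2}m_P(0)/m_P(y)$. You have in fact supplied more detail than the paper, which simply writes down the estimator and cites \citet{ignatiadis2022confidence} for the rate; your explicit bias bound via the Gaussian tail of the characteristic function, the Parseval variance bound, and the clipping at $\eta/2$ to stabilize the ratio are precisely the ingredients behind that citation.
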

The uniform rate derived is barely slower than the parametric rate $1/\surd{n}$. Such a fast uniform estimation rate also implies that it is possible to construct short confidence intervals for $\lidr(y)$~\citep{ignatiadis2022confidence}. 

\begin{proof}
Consider the estimator
$$ \widehat{\lidr}(y) := \frac{ \phi(y) \sum_{i=1}^n K(Y_i/h_n)}{ \phi(0) \sum_{i=1}^n K\{(Y_i - y)/h_n\}},$$
where $K(u) = \sin(u)/(\pi u)$ is the sinc kernel and the bandwidth is chosen as $h_n = 1/\surd{\log n}$. The proof of the claimed uniform rate follows as in~\citet[ Propositions S9 and S10 in the supplement]{ignatiadis2022confidence}.

\end{proof}

\subsection{An extension for compatible signal distributions}
\label{subsec:compatible-extension}

In this section, we construct a joint distribution for $(X,Y,\A)$ that is consistent with the bivariate marginalizations for $(X,Y)$ and $(Y,\A)$. Our construction begins with the weighted hyperbolic secant function, defined as
\begin{align}
\label{eq:weighted-sech}
    \sech_w(y) := \left[ \int e^{xy}w(\dd x) \right]^{-1},
\end{align}
where $w(\dd x) = e^{-x^2/2}\P(\dd x)/\rho_0$ is a re-weighted version of the signal distribution $\P$, and $\rho_0 = \int e^{-x^2/2}\P(\dd x)$. The joint density for $(X,Y,\A)$ is defined
\begin{align*}
    \begin{cases}
        \phi(y) \; e^{xy-x^2/2} \P(\dd x)\sech_w(y) \; &\text{ if } \A=0 \\
        \phi(y) \; e^{xy-x^2/2} \P(\dd x)(1-\sech_w(y)) &\text{ if } \A=1.
    \end{cases}
\end{align*}
The above joint distribution implies $\mathcal{A} \indep X \mid Y$, i.e.
\begin{align}
\label{eq:activity-joint-conditional-prob}
    \P(\A=0 \mid X=x,Y=y) = \sech_w(y).
\end{align}
\begin{proposition}
    If $\P$ is compatible with the zero assumption, then \eqref{eq:activity-joint-conditional-prob} is a valid probability and the joint distribution for $(X,Y,\A)$ displayed above is consistent with the bivariate distributions for $(X,Y)$ and $(\A,Y)$:
    \begin{align*}
        X &\sim \P(\dd x), \hspace{1em} Y \mid X \sim \mathrm{N}(X,1) \\
        \A &\sim \textnormal{Bernoulli}(1-\rho_0), \hspace{1em} Y \mid \A \sim \begin{cases}
            \phi \hspace{1em} &\text{if }\A=0\\
            \psi_1 &\text{if }\A=1,
        \end{cases}
    \end{align*}
    where $\psi_1(y) := \rho_1^{-1}\phi(y) \int (e^{xy}-xy-1)e^{-x^2/2}\P(\dd x)$.
\end{proposition}
\begin{proof}
We have
\begin{align*}
    \cosh_w(y) &:= 1/\sech_w(y) \\
    &= \rho_0^{-1} \int e^{xy-x^2/2}\P(\dd x) \\
    &= 1+\rho_0^{-1}  \int (e^{xy}-1)e^{-x^2/2}\P(\dd x) \\
    &= 1+\rho_0^{-1}  \int (e^{xy}-xy-1)e^{-x^2/2}\P(\dd x),
\end{align*}
where the last line follows from compatibility (Theorem \ref{thm-compatibility}). Since the integrand is positive, $\cosh_w(y) \geq 1$ which implies $\sech_w(y) \in [0,1]$, so \eqref{eq:activity-joint-conditional-prob} is a valid probability for any $y$. 

For the second claim, we have
\begin{align*}
    \P(x,y,0) + \P(x,y,1) = \int \phi(y-x)\P(\dd x),
\end{align*}
so the joint density is consistent with the distribution for $(X,Y)$. Finally, integrating out $x$ in the $\A=0$ case, we have
\begin{align*}
    \int \P(\dd x,y,0) &= \phi(y) \sech_w(y) \int e^{xy-x^2/2}\P(\dd x)  \\
    &= \rho_0 \phi(y).
\end{align*}
Integrating out $x$ in the $\A=1$ case gives
\begin{align*}
    \int \P(\dd x,y,1) &= \phi(y)(1-\sech_w(y)) \int e^{xy-x^2/2} \P(\dd x) \\
    &= \phi(y) \; \frac{\int e^{xy}w(\dd x)-1}{\int e^{xy}w(\dd x)}\int e^{xy-x^2/2} \P(\dd x) \\
    &= \phi(y) \cdot \rho_0 \int (e^{xy} -1)w(\dd x)  \\
    &= \phi(y) \int (e^{xy}-1) e^{-x^2/2}\P(\dd x) \\
    &= \phi(y) \int (e^{xy}-xy-1) e^{-x^2/2}\P(\dd x),
\end{align*}
where the last line follows from compatibility. Dividing and multiplying by $\rho_1$ gives the desired result,
\begin{align*}
    \int \P(\dd x,y,1) = \rho_1 \psi_1(y).
\end{align*}
\end{proof}

As the previous result shows, the joint distribution of $(X,Y,\A)$ defined via \eqref{eq:activity-joint-conditional-prob} is consistent with $(X,Y)$ and $(\A,Y)$. However, it is inconsistent with the specification \eqref{eq:activity_given_x} of $(X,\A)$ given in Section \ref{sec-probabilistic-interpretation}; marginalization of $y$ in \eqref{eq:activity-joint-conditional-prob} yields
\begin{align*}
    \P(\A=0 \mid X=x) = e^{-x^2/2} \int \phi(y) e^{xy} \sech_w (y) \dd y.
\end{align*}
Moreover, an implication of the definition \eqref{eq:activity-joint-conditional-prob} is that $X \indep \A \mid Y$, meaning that the event $\A = 0$ does not imply anything about the signal $X$ given $Y$, e.g. that it is small in some sense.

\end{appendix}

\end{document}